\numberwithin{equation}{section}
\def\ni {\noindent}
\newcommand{\N}{\mathbb{N}}
\newcommand{\cC}{\mathcal{C}}
\newtheorem{theorem}{Theorem}[section]
\newtheorem{definition}{Definition}[section]
\newtheorem{lemma}[theorem]{Lemma}
\newtheorem{proposition}[theorem]{Proposition}
\newtheorem{corollary}[theorem]{Corollary}
\title{\textbf{Rainbow Neighbourhoods of Graphs}}
\author{Johan Kok}
\affil{\small Tshwane Metropolitan Police Department, \\ City of Tshwane, South Africa\\ {\tt kokkiek2@tshwane.gov.za}}
\author{Sudev Naduvath}
\affil{\small Centre for Studies in Discrete Mathematics\\ Vidya Academy of Science \& Technology\\ Thrissur, India.\\ {\tt sudevnk@gmail.com}}
\author{Muhammad Kamran Jamil}
\affil{\small Department of Mathematics\\ Riphah Institute of Computing and Applied Sciences\\ Riphah International University\\ Lahore, Pakistan.\\ {\tt m.kamran.sms@gmail.com}}
\date{}
\begin{document}
\maketitle

\begin{abstract}
\ni In this paper, we introduce the notion of the rainbow neighbourhood and a related graph parameter namely the rainbow neighbourhood number and report on preliminary results thereof. The closed neighbourhood $N[v]$ of a vertex $v \in V(G)$ which contains at least one coloured vertex of each colour in the chromatic colouring of a graph is called a rainbow neighbourhood. The number of rainbow neighbourhoods in a graph $G$ is called the rainbow neighbourhood number of $G$, denoted by $r_\chi(G)$. We also introduce the concepts of an expanded line graph of a graph $G$ and a $v$-clique of $v \in V(G)$.  With the help of these new concepts, we also establish a necessary and sufficient condition for the existence of a rainbow neighbourhood in the line graph of a graph $G$.
\end{abstract}

\ni\textbf{Keywords:} colour cluster; colour classes; rainbow neighbourhood; expanded line graph; $v$-clique.

\vspace{0.25cm}

\ni \textbf{Mathematics Subject Classification:} 05C07, 05C38, 05C75, 05C85.
 
\section{Introduction}

For general notation and concepts in graphs and digraphs see \cite{BM1,CL1,FH,DBW}. For Further definitions in chromatic graph theory, see \cite{CZ1,JT1}. Unless specified otherwise, all graphs mentioned in this paper are simple, connected  and undirected graphs. 

\vspace{0.2cm}

The \textit{vertex colouring} or simply a \textit{colouring} of a graph is an assignment of colours or labels to the vertices of a graph subject to certain conditions. In a proper colouring of a graph, its vertices are coloured in such a way that no two adjacent vertices in that graph have the same colour. The chromatic number $\chi(G)\geq 1$ of a graph $G$ is the minimum number of distinct colours that allow a proper colouring of $G$. Such a colouring is called a \textit{chromatic colouring}.

\vspace{0.2cm}

Different types of graph colourings and related parameters have been introduced in various studies on graph colourings. Many practical and real life situations paved paths to different graph colouring problems. In this paper, we study the graphs admitting chromatic colourings subject to certain conditions.

\section{Rainbow Neighbourhoods in a Graph}

Unless mentioned otherwise, we follow the convention that we consider the colouring $(c_1,c_2,c_3,\ldots,c_\ell )$, $\ell = \chi(G)$ in such a way that the colour $c_1$ is assigned to maximum number of vertices, then the colour $c_2$ is given to maximum number of remaining vertices among the remaining uncoloured vertices and proceeding like this, at the final step, the remaining uncoloured vertices are given colour $c_\ell$. This convention may be called the \textit{rainbow neighbourhood convention}.

\vspace{0.2cm}

In view of the convention mentioned above, the notion of a rainbow neighbourhood in a graph is defined as follows.

\begin{definition}{\rm 
Let $G$ be a graph with a chromatic colouring $\cC$ defined on it. The \textit{rainbow neighbourhood} in $G$ is the closed neighbourhood $N[v]$ of a vertex $v \in V(G)$ which contains at least one coloured vertex of each colour in the chromatic colouring $\cC$ of $G$.}
\end{definition}

If $N[v]$ is a rainbow neighbourhood, then we say that vertex $v$ yields a rainbow neighbourhood. It is interesting to observe that the number of vertices which yield rainbow neighbourhoods differ significantly even within the same graph classes. This fact makes the study on the number of vertices yielding rainbow neighbourhoods in different graph classes much interesting. Next we define the rainbow neighbourhood number of a graph.

\begin{definition}{\rm 
Let $G$ be a graph with a chromatic colouring $\cC$ defined on it. The number of vertices in $G$ yielding rainbow neighbourhoods is called the \textit{rainbow neighbourhood number} of the graph $G$, denoted by $r_\chi(G)$.}
\end{definition}

It is very interesting to note that after in-depth study a sensible colouring cohesion index can be discovered. Intuitively such colouring cohesion index could have a similar application for colouring as connectivity has for graphs. This motivates us to proceed further in this direction 
by studying the above mentioned notions.

The following results discuss the rainbow neighbourhood number of some fundamental graph classes. 

\begin{proposition}\label{Prop-2.1a}
For $n\ge 1,\ r_\chi(P_n)=n$.
\end{proposition}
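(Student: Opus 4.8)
The plan is to exploit the fact that a path is bipartite, so that only two colours are needed, and that in any proper colouring every vertex is adjacent to a vertex of the opposite colour. First I would dispose of the trivial base case $n=1$: here $\chi(P_1)=1$, the single vertex $v$ receives colour $c_1$, and its closed neighbourhood $N[v]=\{v\}$ contains the unique colour of $\cC$, so $v$ yields a rainbow neighbourhood and $r_\chi(P_1)=1=n$.

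For $n\ge 2$, I would observe that $P_n$ is bipartite and contains at least one edge, so $\chi(P_n)=2$. Under the rainbow neighbourhood convention the vertices are coloured alternately with $c_1,c_2$, the colour $c_1$ being assigned to the larger colour class; however, the argument does not depend on which proper $2$-colouring is used. The key step is the following: since $P_n$ is connected and $n\ge 2$, every vertex $v\in V(P_n)$ has at least one neighbour $u$, and in a proper colouring $c(u)\ne c(v)$. Because exactly two colours are available, we get $\{c(v),c(u)\}=\{c_1,c_2\}$, and hence the closed neighbourhood $N[v]\supseteq\{v,u\}$ already contains a vertex of each colour of $\cC$. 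Therefore $N[v]$ is a rainbow neighbourhood.

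Since $v$ was an arbitrary vertex, I would conclude that all $n$ vertices yield rainbow neighbourhoods, giving $r_\chi(P_n)=n$ for every $n\ge 2$, which together with the base case completes the proof. I do not expect a genuine obstacle here: the only points requiring care are the separate treatment of $n=1$ (where a single colour suffices) and the observation that for $n\ge 2$ each vertex is guaranteed a differently coloured neighbour, which is exactly what makes every closed neighbourhood rainbow.
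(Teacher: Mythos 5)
Your proof is correct and follows essentially the same route as the paper's: dispose of the trivial case $n=1$, then for $n\ge 2$ use $\chi(P_n)=2$ together with the fact that every vertex has a neighbour of the opposite colour, so every closed neighbourhood contains both colours. Your write-up is simply a more carefully spelled-out version of the paper's argument.
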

\begin{proof}
Since $\chi(P_1)=1$, the result is trivial. For $n\geq 2$, we have $\chi(P_n)=2$, since any vertex $v \in V(P_n)$ with colour $c_1$ is adjacent to at least one vertex coloured $c_2$ and vice versa. Hence, the result follows.
\end{proof}

\begin{proposition}\label{Prop-2.1b}
For $n\ge 3,\ r_\chi(C_n)=
\begin{cases}
3, & \text {if $n$ is odd},\\
n, & \text {if $n$ is even.}
\end{cases}$
\end{proposition}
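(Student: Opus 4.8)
The plan is to split on the parity of $n$, since $\chi(C_n)=2$ for even $n$ and $\chi(C_n)=3$ for odd $n$. The even case is immediate: the (essentially unique) proper $2$-colouring alternates $c_1,c_2$ around the cycle, so every vertex $v$ has both of its neighbours coloured with the colour opposite to its own. Hence $N[v]$ contains both $c_1$ and $c_2$ for every $v$, and all $n$ vertices yield rainbow neighbourhoods, giving $r_\chi(C_n)=n$.

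For odd $n$, write $n=2k+1$, so that $\alpha(C_n)=k$ and three colours are required. The crux is to pin down the structure of a colouring obeying the rainbow neighbourhood convention. First I would argue that colour $c_1$ is placed on a maximum independent set, hence on exactly $k$ vertices. The key structural step is to describe what remains: I would show that the complement of any maximum independent set of $C_{2k+1}$ consists of $k-1$ isolated vertices together with a single edge. This follows from a gap count --- a size-$k$ independent set induces $k$ cyclic gaps of total length $k+1$, each at least $1$, forcing exactly one gap of length $2$ and the rest of length $1$. Consequently the remaining graph has independence number $k$, so the convention assigns $c_2$ to $k$ of these vertices and leaves exactly one vertex, which receives $c_3$.

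With the colour classes fixed as $|c_1|=|c_2|=k$ and $|c_3|=1$, let $v^\ast$ be the unique vertex coloured $c_3$. Since $c_3$ occurs only at $v^\ast$, a closed neighbourhood $N[v]$ can contain $c_3$ only when $v\in\{v^\ast\}\cup N(v^\ast)$, i.e. for at most three vertices. It then remains to verify that each of these three closed neighbourhoods actually contains all three colours. Here I would use that $v^\ast$ is one endpoint of the unique ``length-$2$ gap'' edge: its two neighbours are the other endpoint of that edge (coloured $c_2$) and an adjacent vertex of the independent set (coloured $c_1$), so $N[v^\ast]$ sees $c_1,c_2,c_3$. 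A short symmetric check of the two neighbours of $v^\ast$ --- using that each has $v^\ast$ on one side and a vertex of the complementary colour on the other --- shows both of their closed neighbourhoods are rainbow as well.

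The main obstacle is the middle step: proving that the convention genuinely forces $|c_3|=1$ rather than merely permitting it. This rests entirely on the gap characterisation of maximum independent sets in an odd cycle and the resulting computation that the leftover graph has independence number exactly $k$; once this is in hand, the colour-class sizes are determined and the counting of rainbow vertices is routine. I would also double-check that distinct maximum independent sets chosen for $c_1$ all lead to the same count $3$, which the gap argument guarantees, since the leftover structure is the same up to rotation in every case.
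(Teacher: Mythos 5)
Your proof is correct and follows essentially the same route as the paper: the even case via the alternating $2$-colouring, and the odd case by locating the unique vertex coloured $c_3$ and observing that only it and its two neighbours can yield rainbow neighbourhoods. The one difference is that you rigorously justify, via the gap-counting characterisation of maximum independent sets in $C_{2k+1}$, that the rainbow neighbourhood convention forces $|c_3|=1$ with the remaining vertices alternately coloured --- a structural fact the paper simply asserts --- so your write-up is a strictly more complete version of the same argument.
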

\begin{proof}
\textit{Case-1:} Let $n$ be even. Then $\chi(C_n)=2$. Then, the result for cycle $C_n$ follows similarly to that of the path $P_n$. 

\textit{Case-2:} Let $n$ be odd. Then, $\chi(C_n)=3$ and only one vertex in $C_n$ has the colour $c_3$. All other vertices are alternatively coloured using the colours $c_1$ and $c_2$. Let the vertices be labeled clockwise and consecutively $v_1,v_2,v_3,\ldots,v_n$. Without loss of generality, assume that $v_j$, $j \in \{2,3,4,\dots,\ell-1\}$ is coloured $c_3$. Clearly, only the vertices $v_{j-1},v_{j},v_{j+1}$ has closed neighbourhoods containing all colours $c_1,c_2,c_3$. Hence the result follows.
\end{proof}

\begin{proposition}\label{Prop-2.1c}
For $n\ge 1,\ r_\chi(K_n)=n$.
\end{proposition}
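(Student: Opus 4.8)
The plan is to prove that $r_\chi(K_n) = n$ for all $n \geq 1$ by showing that \emph{every} vertex of $K_n$ yields a rainbow neighbourhood. The key observation is that in the complete graph, every pair of vertices is adjacent, so each vertex must receive a distinct colour in any proper colouring; hence $\chi(K_n) = n$, and the chromatic colouring uses colours $c_1, c_2, \ldots, c_n$ with exactly one vertex per colour.

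\medskip

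The main step is then to examine the closed neighbourhood $N[v]$ of an arbitrary vertex $v \in V(K_n)$. Since $v$ is adjacent to every other vertex, we have $N[v] = V(K_n)$ for each $v$. Because $V(K_n)$ contains exactly one vertex of each of the $n$ colours $c_1, c_2, \ldots, c_n$, the closed neighbourhood $N[v]$ trivially contains at least one vertex of every colour in the chromatic colouring. Therefore $N[v]$ is a rainbow neighbourhood for every $v \in V(K_n)$.

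\medskip

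Counting then gives the result immediately: all $n$ vertices yield rainbow neighbourhoods, so $r_\chi(K_n) = n$. I would note that the $n = 1$ boundary case is consistent, since $K_1$ has $\chi(K_1) = 1$ and its single vertex has a closed neighbourhood containing the one colour used, yielding $r_\chi(K_1) = 1$.

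\medskip

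I do not anticipate any genuine obstacle here; the proof is essentially a direct consequence of the fact that $N[v] = V(K_n)$ for every $v$, which collapses the rainbow condition to the trivial observation that the whole vertex set contains every colour. If anything, the only point requiring a sentence of care is confirming that $\chi(K_n) = n$ forces exactly one vertex per colour (rather than merely at least one), but this is forced because $n$ colours must be distributed among $n$ mutually adjacent vertices.
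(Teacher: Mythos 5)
Your proof is correct and uses exactly the same key observation as the paper, namely that $N[v] = V(K_n)$ for every vertex $v$, so every closed neighbourhood contains all $\chi(K_n) = n$ colours; the paper states this in one line and you simply spell out the details. No issues.
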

\begin{proof}
For $K_n$, we have $N[v] = V(K_n),\ \forall\, v \in V(K_n)$. Hence, the result follows.
\end{proof}

\begin{proposition}\label{Prop-2.1d}
For $n\ge 3$, the rainbow neighbourhood number of a wheel graph $W_{n+1}=C_n+K_1$ is given by
 $r_\chi(W_{n+1}) =
 \begin{cases}
 4, & \text {if $n$ is odd},\\
 n+1, & \text {if $n$ is even.}
 \end{cases}$
\end{proposition}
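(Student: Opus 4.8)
The plan is to split into the two parity cases for $n$, first pinning down $\chi(W_{n+1})$ and then exploiting the fact that the central vertex of the wheel is universal. Writing $W_{n+1}=C_n+K_1$ with hub $u$ the $K_1$, the vertex $u$ is adjacent to every vertex of the rim cycle $C_n$, so $N[u]=V(W_{n+1})$ and $N[u]$ trivially contains every colour of any chromatic colouring. Hence the hub always yields a rainbow neighbourhood and contributes $1$ to $r_\chi(W_{n+1})$ in both cases; all remaining work is to count how many rim vertices yield rainbow neighbourhoods.

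First I would handle the even case. Here $\chi(C_n)=2$ and, since $u$ is adjacent to all rim vertices, it must receive a third colour, so $\chi(W_{n+1})=3$; following the rainbow neighbourhood convention I colour the rim alternately with $c_1,c_2$ and assign $c_3$ to $u$. For any rim vertex $v$ the two rim-neighbours of $v$ carry the colour opposite to that of $v$, so $v$ together with its rim-neighbours already exhibits both $c_1$ and $c_2$, while the hub supplies $c_3$. Thus every one of the $n$ rim vertices yields a rainbow neighbourhood, and adding the hub gives $r_\chi(W_{n+1})=n+1$.

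Next I would treat the odd case, which is the more delicate one and where the argument of Proposition~\ref{Prop-2.1b} is reused. Now $\chi(C_n)=3$, the hub forces a fourth colour so $\chi(W_{n+1})=4$, with $u$ coloured $c_4$ and the rim coloured exactly as in the odd-cycle colouring: a single rim vertex $v_j$ receives $c_3$ while the rest alternate between $c_1$ and $c_2$. Since $N[v]$ for a rim vertex $v$ always contains $c_4$ from the hub, such $v$ yields a rainbow neighbourhood if and only if $v$ together with its two rim-neighbours carries all three of $c_1,c_2,c_3$. By the computation already in Proposition~\ref{Prop-2.1b}, this happens precisely for $v\in\{v_{j-1},v_j,v_{j+1}\}$; every other rim vertex has both rim-neighbours of the colour opposite to its own, so its closed rim-neighbourhood misses $c_3$. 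Counting these three qualifying rim vertices together with the hub yields $r_\chi(W_{n+1})=4$.

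The only genuine obstacle is justifying that the odd-case rim colouring really places $c_1,c_2,c_3$ on the three consecutive vertices $v_{j-1},v_j,v_{j+1}$ and that no other rim vertex sees all three cycle colours in its closed neighbourhood; but this is exactly the content extracted in the proof of Proposition~\ref{Prop-2.1b}, so it can be invoked directly rather than re-derived. A minor point worth checking is that the rainbow neighbourhood convention is consistent with assigning the unique once-used ``extra'' colour ($c_3$ when $n$ is even, $c_4$ when $n$ is odd) to the hub; this follows because the hub, being adjacent to all rim vertices, must take a colour absent from the rim, and placing a singleton colour there respects the requirement that $c_1$ receive the most vertices.
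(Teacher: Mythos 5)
Your proposal is correct and follows essentially the same route as the paper: observe that the universal hub always yields a rainbow neighbourhood, note $\chi(W_{n+1})=\chi(C_n)+1$ with the hub carrying the extra colour, and reduce the count of qualifying rim vertices to the cycle case of Proposition~\ref{Prop-2.1b}. The paper compresses all of this into ``the result follows immediately,'' whereas you spell out the details (including the check that the colouring respects the rainbow neighbourhood convention), which is a faithful elaboration rather than a different argument.
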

\begin{proof}
Since, the central vertex, say $v$ of $W_{n+1}$ is adjacent to all vertices of $C_n$ and hence $N[v]$ is clearly a rainbow neighbourhood in $W_{n+1}$. Therefore, $\chi(W_{n+1})=\chi(C_n)+1$ and the proof the result follows immediately.
\end{proof}

\begin{proposition}\label{Prop-2.1e}
For a complete $\ell$-partite graphs $K_{r_1,r_2,\dots,r_\ell},\ \ell \geq 2$, $r_i \geq 1$, $r_\chi(K_{r_1,r_2,\dots,r_\ell})=\sum\limits_{i=1}^{\ell}r_i$.
\end{proposition}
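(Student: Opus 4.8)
The plan is to first pin down the chromatic number and the structure of every chromatic colouring of $G = K_{r_1,r_2,\dots,r_\ell}$, and then to verify that every vertex yields a rainbow neighbourhood. Write $V_1,V_2,\dots,V_\ell$ for the independent parts, with $|V_i| = r_i$. Choosing one vertex from each part produces a clique on $\ell$ vertices, so $\chi(G) \ge \ell$; colouring each part with its own colour gives a proper $\ell$-colouring, whence $\chi(G) = \ell$.

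The key structural step is to show that in any chromatic colouring each part is monochromatic and distinct parts receive distinct colours. First I would observe that for $i \ne j$ every vertex of $V_i$ is adjacent to every vertex of $V_j$, so no colour can occur simultaneously in $V_i$ and $V_j$; thus the colour sets $S_i$ used on the parts are pairwise disjoint. Since a chromatic colouring uses exactly $\ell$ colours and each $S_i$ is nonempty, the relation $\ell = \left| \bigcup_i S_i \right| = \sum_i |S_i| \ge \ell$ forces $|S_i| = 1$ for every $i$. Hence each part is monochromatic, carrying a colour $c_i$, with $c_1,\dots,c_\ell$ pairwise distinct.

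With this in hand the rainbow property is immediate. Fix any vertex $v$, say $v \in V_i$. Because $V_i$ is independent and $v$ is adjacent to every vertex outside $V_i$, we have $N[v] = \{v\} \cup \bigcup_{j \ne i} V_j$. The vertex $v$ contributes the colour $c_i$, while each $V_j$ with $j \ne i$ is nonempty (as $r_j \ge 1$) and contributes the colour $c_j$; so all $\ell$ colours appear in $N[v]$, and $v$ yields a rainbow neighbourhood. Since $v$ was arbitrary, every one of the $\sum_{i=1}^{\ell} r_i$ vertices yields a rainbow neighbourhood, giving $r_\chi(G) = \sum_{i=1}^{\ell} r_i$.

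I expect the only real content to lie in the structural claim that every chromatic colouring is forced to be the canonical ``one colour per part'' colouring; this is what guarantees that the answer is independent of the particular chromatic colouring chosen and what reduces the counting to a triviality. The remaining steps are routine once the parts are known to be monochromatic with distinct colours.
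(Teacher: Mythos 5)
Your proposal is correct and follows essentially the same route as the paper: every vertex is adjacent to all vertices of the other parts, so its closed neighbourhood contains all $\ell$ colours and every one of the $\sum_{i=1}^{\ell} r_i$ vertices yields a rainbow neighbourhood. The only difference is that you additionally prove that every chromatic colouring must assign a single, distinct colour to each part --- a point the paper's proof simply takes for granted by working with the canonical colouring --- which makes explicit that the count does not depend on which chromatic colouring is chosen.
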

\begin{proof}
For $\ell \geq 2,\ r_i \geq 1$, the complete $\ell$-partite graphs $K_{r_1,r_2,\dots,r_\ell}$  has vertex partitioning $\mathcal{P}_i,\  1\leq i\leq \ell$ and vertices $v_{i,j} \in \mathcal{P}_i$, $1\leq j \leq r_i$ may have the same colour $c_i$. Since vertex $v_{i,j}$ is adjacent to all vertices $v_{m,k} \in \mathcal{P}_m$, $1\leq k\leq r_m$ for $i \neq m$ it follows that $N[v_{i,j}]$ yields a rainbow neighbourhood. Hence, the result, $r_\chi(K_{r_1,r_2,\dots,r_\ell}) = \sum\limits_{i=1}^{\ell}r_i$.
\end{proof}

\begin{proposition}\label{Prop-2.1f}
For a ladder graph $L_n=P_n\Box P_2,\ n \geq 3$ , we have $r_\chi(L_n)=2n$.
\end{proposition}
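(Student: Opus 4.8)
The plan is to exploit the fact that the ladder graph, like the path $P_n$ treated in Proposition~\ref{Prop-2.1a}, is bipartite, so that the two-colour argument used there transfers almost verbatim. First I would fix coordinates for the vertices: write $V(L_n)=\{u_1,\dots,u_n\}\cup\{w_1,\dots,w_n\}$, where $u_1u_2\cdots u_n$ and $w_1w_2\cdots w_n$ are the two copies of $P_n$ (the rails) and $u_iw_i$ is an edge for each $i$ (the rungs). With this labelling the adjacencies are exactly $u_i\sim u_{i+1}$ and $w_i\sim w_{i+1}$ for $1\le i\le n-1$, together with $u_i\sim w_i$ for $1\le i\le n$.

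Next I would verify that $\chi(L_n)=2$. The colouring that assigns $c_1$ to $u_i$ when $i$ is odd and to $w_i$ when $i$ is even, and $c_2$ to the remaining vertices, is readily checked to be proper on all three edge types; since $L_n$ contains an edge, this forces $\chi(L_n)=2$. A convenient by-product is that each colour class has exactly $n$ vertices, so the rainbow neighbourhood convention is satisfied without any reshuffling of colours.

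The heart of the argument is then a single observation, identical in spirit to the one driving Proposition~\ref{Prop-2.1a}: in any proper $2$-colouring of a graph with no isolated vertex, every vertex is adjacent to at least one vertex of the opposite colour. Since $L_n$ is connected for $n\ge 3$, every $v\in V(L_n)$ has a neighbour, and that neighbour necessarily carries the colour distinct from $v$. Hence $N[v]$ contains a vertex coloured $c_1$ and a vertex coloured $c_2$, so every vertex of $L_n$ yields a rainbow neighbourhood. Counting the $2n$ vertices gives $r_\chi(L_n)=2n$.

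I do not anticipate a genuine obstacle here; the only point requiring a line of care is confirming that the displayed two-colouring is proper across the rungs as well as along both rails, after which the counting is immediate. The same reasoning in fact shows that $r_\chi(G)=|V(G)|$ for every connected bipartite graph with at least one edge, of which $L_n$ is merely a special case.
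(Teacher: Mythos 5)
Your proof is correct and follows essentially the same route as the paper's: exhibit the alternating proper $2$-colouring of the two rails, observe that every vertex then has a neighbour of the opposite colour (so every closed neighbourhood contains both colours), and count all $2n$ vertices. The general remark you close with — that this works for every connected bipartite graph — is precisely the paper's subsequent Theorem~\ref{Thm-2.2}, so no gap there either.
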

\begin{proof}
Let the poles of the ladder be $P^{(1)}_n$ and $P^{(2)}_n$ with vertices consecutively labelled top to bottom, $v_{1,1},v_{1,2},v_{1,3},\dots,v_{1,n}$ and $v_{2,1},v_{2,2},v_{2,3},\dots,v_{2,n}$ respectively. Let the steps be the edges $v_{1,i}v_{2,i}$, $2\leq i \leq n-1$. Colour the vertices of $P^{(1)}_n$ using the rule $c(v_{1,i})=c_1$ for odd values of $i$ and $c(v_{1,i})=c_2$ for even values of $i$. Also, let $c(v_{2,i})=c_1$, if $i$ is even and $c(v_{1,i})=c_2$, if $i$ is odd. It can now easily follow that any vertex $v\in V(L_n)$ with colour, say $c_1$, is adjacent to at least one vertex coloured $c_2$ and vice versa. This colouring is obviously a chromatic colouring of $L_n$. Hence, the result follows immediately.
\end{proof}

\section{Some New Results on $r_\chi(G)$}

In this section we present preliminary results in respect of the rainbow neighbourhood number for certain graphs. We begin the study by stating some important results before we proceed to specific graphs. We recall that a graph $G$ of order $n$ has $\chi(G) = n$ if and only if $G$ is complete.

\begin{theorem}\label{Thm-2.1}
Any graph $G$ of order $n$ has $\chi(G) \leq r_\chi(G) \leq n$.
\end{theorem}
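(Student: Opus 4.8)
The bound $r_\chi(G) \le n$ is immediate, since by definition $r_\chi(G)$ counts vertices of $G$ that yield rainbow neighbourhoods, and $G$ has only $n$ vertices in total. The substance of the statement is therefore the lower bound $\chi(G) \le r_\chi(G)$, and the plan is to exhibit at least $\chi(G)$ distinct vertices yielding rainbow neighbourhoods. To this end I would fix a chromatic colouring $\cC$ with colour classes $V_1, V_2, \dots, V_\ell$, where $\ell = \chi(G)$, and prove the sharper statement that \emph{every} colour class $V_i$ contains at least one vertex that yields a rainbow neighbourhood. Since the classes are pairwise disjoint, this produces $\ell = \chi(G)$ distinct such vertices and gives the desired inequality at once.

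The core of the argument is a recolouring argument by contradiction. Suppose some class $V_i$ contained no vertex yielding a rainbow neighbourhood. Then for each $v \in V_i$ there is a colour $c_{m(v)}$ with $m(v) \neq i$ that does not appear in $N[v]$; in particular no neighbour of $v$ carries the colour $c_{m(v)}$, so $v$ may legitimately be recoloured with $c_{m(v)}$. I would then recolour \emph{all} vertices of $V_i$ simultaneously, sending each $v$ to its colour $c_{m(v)}$. The crucial observation is that $V_i$ is an independent set, being a colour class of a proper colouring; hence no two recoloured vertices are adjacent, and each recoloured vertex $v$ still avoids the colour of every neighbour, since its neighbours all lie outside $V_i$ and retain their original colours. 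The resulting colouring is therefore proper and avoids the colour $c_i$ entirely, so it uses at most $\ell - 1$ colours, contradicting $\chi(G) = \ell$.

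I expect the main point requiring care to be the verification that the simultaneous recolouring remains proper, which is exactly where the independence of the colour class $V_i$ is used: one must check that neither conflicts within $V_i$ nor conflicts across the boundary of $V_i$ can arise. Once this is established the two inequalities combine to give $\chi(G) \le r_\chi(G) \le n$. I would also note in passing that this argument yields slightly more than the stated bound, namely that each of the $\chi(G)$ colour classes independently contributes a rainbow-yielding vertex; the examples of odd cycles, where $r_\chi = \chi = 3$, show that the lower bound is best possible.
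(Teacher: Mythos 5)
Your proof is correct, but it is a genuinely different argument from the one in the paper. The paper proceeds by induction on the order $n$: it takes a graph $G$ of order $k$ satisfying the bound, attaches a new vertex $u$ to $t$ of its vertices, and splits into cases according to whether $\chi(G')=\chi(G)$ (where it claims $r_\chi(G')\geq r_\chi(G)$) or $\chi(G')=\chi(G)+1$ (where it claims $N[u]$ is a rainbow neighbourhood and, further, that $\langle N[u]\rangle$ is necessarily complete, so that every vertex of $N(u)$ also yields a rainbow neighbourhood). Your recolouring argument is cleaner and in fact stronger: by showing that a colour class $V_i$ with no rainbow-yielding vertex could be dissolved --- each $v\in V_i$ moved to a colour $c_{m(v)}\neq c_i$ absent from $N[v]$, with independence of $V_i$ guaranteeing the simultaneous recolouring stays proper --- you obtain that \emph{every} colour class of \emph{every} chromatic colouring contains a vertex yielding a rainbow neighbourhood, which gives $\chi(G)\leq r_\chi(G)$ immediately and makes the bound manifestly independent of the particular colouring chosen under the rainbow neighbourhood convention. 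Your approach also sidesteps the fragile steps of the paper's induction: the claim that $\langle N[u]\rangle$ is complete when the chromatic number rises is not true in general (take $G=C_5$ and join $u$ to all five vertices, so $\chi$ rises from $3$ to $4$ while $\langle N[u]\rangle$ is the wheel, not a clique), and the claim $r_\chi(G')\geq r_\chi(G)$ in the other case glosses over the possibility that the chromatic colouring of $G'$ recolours vertices of $G$. So your argument not only matches the statement but repairs weaknesses in the paper's own proof; the only thing the paper's induction ``buys'' is stylistic consistency with its other vertex-attachment constructions (such as the join and Chithra graph results), not additional strength.
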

\begin{proof}
For $n=1$ the graph $K_1$ has $\chi(K_1)=1$ and $r_\chi(K_1) =1$.  For $n=2$, the path $P_2$ has $\chi(P_2) =2$ and $r_\chi(P_2)=2$. For $n=3$, the path $P_3$ has $\chi(P_3) = 2$ and $r_\chi(P_3) = 3$. Also the cycle $C_3$ has $\chi(C_3)=3$ and $r_\chi(C_3) =3$. Therefore, $r_\chi(G) \geq \chi(G)$ for all graphs of order $1\leq n \leq 3$.

\vspace{0.2cm}

Assume the result holds for all graphs of order $1\leq n\leq k$. Consider graph of order $k$. Hence $r_\chi(G) \geq \chi(G)$. Now attach a new vertex $u$ to a number say, $t$, $1\leq t \leq k$ vertices of $G$ to obtain a new graph $G'$. If possible, identify a colour in the chromatic colouring of $G$ with which vertex $u$ can be coloured such that $G'$ with or without recolouring of vertices, has a chromatic colouring. It implies that $\chi(G') = \chi(G)$ and also $r_\chi(G')\geq r_\chi(G)$. Hence the result holds for the graph $G'$. Alternatively, an additional colour is indeed required to allow a chromatic colouring for $G'$ hence, $\chi(G') = \chi(G) + 1$. Then $N[u]$ yields a rainbow neighbourhood containing $\chi(G)+1$ colours. Further to that, then $N[v]$ yields a rainbow neighbourhood $\forall$ $v\in N(u)$ because the induced subgraph $\langle N[u]\rangle$ is necessary complete (a clique). Therefore, $r_\chi(G') = \chi(G) + 1 = \chi(G')$. So the result $r_\chi(G') \geq \chi(G')$ holds. Through induction the result then holds for all graphs of order $n \in \Bbb N$.\\\\  
Also, $r_\chi(G) \leq n$ is obvious, therefore the result holds. 
\end{proof}

Note that Theorem \ref{Thm-2.1} holds for any graph and not only for the main class of graphs under study.
 
\begin{theorem}\label{Thm-2.2}
If $G$ is a bipartite graph on $n$ vertices, $r_\chi(G)=n$. 
\end{theorem}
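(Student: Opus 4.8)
The plan is to split on whether $G$ has any edges, which for a connected graph is the same as splitting on $n=1$ versus $n\ge 2$. If $n=1$, then $G=K_1$, we have $\chi(G)=1$, and the single closed neighbourhood trivially contains the only colour $c_1$; so $r_\chi(G)=1=n$ and we are done. For $n\ge 2$, I would first record that a connected bipartite graph has $\chi(G)=2$, so a chromatic colouring uses exactly the two colours $c_1,c_2$. The crucial structural input is that the bipartition $(X,Y)$ is (up to swapping the parts) forced, and the two colour classes are precisely $X$ and $Y$; in accordance with the rainbow neighbourhood convention one simply assigns $c_1$ to the larger part and $c_2$ to the other.

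The heart of the argument is a single observation about closed neighbourhoods. First I would note that connectivity with $n\ge 2$ forces every vertex $v$ to have degree at least $1$, hence to possess at least one neighbour $u$. Since $G$ is bipartite and $v,u$ are adjacent, $u$ lies in the part opposite to $v$ and therefore carries the colour different from that of $v$. Consequently $N[v]=\{v\}\cup N(v)$ contains one vertex of colour $c_1$ (either $v$ or $u$) and one vertex of colour $c_2$, so $N[v]$ meets both colour classes and $v$ yields a rainbow neighbourhood. As $v$ was arbitrary, every one of the $n$ vertices yields a rainbow neighbourhood, giving $r_\chi(G)=n$.

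I do not expect a genuine obstacle here; the proof is essentially immediate once the key observation is isolated. The only point requiring care is the role of the connectivity hypothesis: it is exactly what guarantees $\deg(v)\ge 1$ for every $v$, and hence that no closed neighbourhood is monochromatic. I would flag this explicitly, because the statement would fail for a bipartite graph containing an isolated vertex (whose closed neighbourhood sees only one colour when $\chi(G)=2$); the standing assumption in the paper that all graphs are connected is what rescues the clean equality $r_\chi(G)=n$. A secondary tidy-up is to confirm that the upper bound $r_\chi(G)\le n$ is automatic (it is trivial, and already noted in Theorem~\ref{Thm-2.1}), so that establishing the lower bound $r_\chi(G)\ge n$ via the neighbourhood argument suffices.
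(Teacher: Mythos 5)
Your proof is correct and is essentially the paper's own argument made explicit: the paper's one-line proof (note that $\chi(G)=2$, then cite the proofs of Proposition~\ref{Prop-2.1e} and Theorem~\ref{Thm-2.1}) rests on exactly the observation you isolate, namely that in a connected $2$-chromatic graph every vertex has at least one neighbour, necessarily of the opposite colour, so every closed neighbourhood meets both colour classes. Your explicit flagging of the connectivity hypothesis (which rules out isolated vertices, for which the equality would fail) is a worthwhile clarification that the paper leaves implicit in its standing assumptions.
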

\begin{proof}
For any bipartite graph $G$, we have $\chi(G)=2$ and hence the result is an immediate consequence of the proof of Proposition \ref{Prop-2.1e} and Theorem \ref{Thm-2.1}. 
\end{proof}

We observe that if it is possible to permit a chromatic colouring of any graph $G$ of order $n$ such that the star subgraph of $G$, whose vertex set is $N[v]$ such that $v$ is the central vertex and the open neighbourhood $N(v)$ is the set of pendant vertices, has at least one coloured vertex from each colour for all $v \in V(G)$, then $r_\chi(G)=n$. %Certainly to examine this property for any given graph is complex.

\begin{lemma}\label{Lem-2.3}
For any graph $G$, the graph $G'= K_1+G$ has $r_\chi(G')=1+ r_\chi(G)$.
\end{lemma}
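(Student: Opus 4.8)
The plan is to exploit the fact that the join $G' = K_1 + G$ attaches the single vertex $u$ of $K_1$ to every vertex of $G$, so that $u$ is universal in $G'$. First I would fix the chromatic number: since $u$ is adjacent to all of $V(G)$, it cannot share a colour with any vertex of $G$, whence $\chi(G') = \chi(G) + 1$. Writing $\ell = \chi(G)$, a chromatic colouring of $G'$ obeying the rainbow neighbourhood convention uses colours $c_1,\ldots,c_\ell,c_{\ell+1}$, where the restriction to $V(G)$ is a chromatic colouring of $G$ using $c_1,\ldots,c_\ell$ and $u$ is the unique vertex receiving the extra colour. Because $\{u\}$ is a singleton colour class, the convention places it last and leaves the size-ordering of the classes $c_1,\ldots,c_\ell$ inherited unchanged from $G$, so this restriction respects the convention for $G$ as well; conversely, any such colouring of $G$ extends to one of $G'$ by colouring $u$ with a fresh colour. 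Hence the two colourings may be matched up and the relevant counts compared consistently.

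Next I would dispose of the apex vertex. Since $N_{G'}[u] = \{u\}\cup V(G) = V(G')$, its closed neighbourhood contains every colour of $\cC$, so $u$ always yields a rainbow neighbourhood, contributing $1$ to $r_\chi(G')$.

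The heart of the argument is to show that, for each $v \in V(G)$, the vertex $v$ yields a rainbow neighbourhood in $G'$ if and only if it yields one in $G$. The key identity is $N_{G'}[v] = N_G[v]\cup\{u\}$. Since $u$ is the only vertex carrying $c_{\ell+1}$ and no vertex of $G$ carries it, the set $N_{G'}[v]$ acquires $c_{\ell+1}$ precisely through $u$ and contains the remaining colours $c_1,\ldots,c_\ell$ exactly when $N_G[v]$ does. Thus $N_{G'}[v]$ is a rainbow neighbourhood of $G'$ if and only if $N_G[v]$ is a rainbow neighbourhood of $G$. Counting, the vertices of $V(G)$ yielding rainbow neighbourhoods in $G'$ are in bijection with those yielding rainbow neighbourhoods in $G$, giving exactly $r_\chi(G)$ such vertices; adding the contribution of $u$ yields $r_\chi(G') = 1 + r_\chi(G)$.

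The step I would be most careful about, and the only genuinely delicate point, is the interplay with the rainbow neighbourhood convention: I must ensure that the colouring of $G'$ being counted really does restrict to a convention-respecting colouring of $G$, so that both sides of the ``if and only if'' refer to consistent colourings rather than silently comparing $r_\chi(G')$ against a colouring of $G$ the convention would not select. Since $u$ forms a singleton class placed last without disturbing the size-ordering of the other classes, this compatibility holds, but it is worth stating explicitly to keep the final count honest.
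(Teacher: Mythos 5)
Your proof is correct and takes essentially the same approach as the paper: colour the apex vertex with the new colour $c_{\ell+1}$, note that its closed neighbourhood is all of $V(G')$ so it always yields a rainbow neighbourhood, and show that a vertex $v \in V(G)$ yields a rainbow neighbourhood in $G'$ if and only if it does in $G$, since $N_{G'}[v] = N_G[v]\cup\{u\}$ only adds the colour $c_{\ell+1}$. Your explicit check that the colouring of $G'$ restricts to a convention-respecting colouring of $G$ is a point the paper leaves implicit, but it is a refinement of the same argument rather than a different route.
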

\begin{proof}
Let the graph $G$ permit the chromatic colouring $(c_1,c_2,c_3,\dots,c_{\ell})$, where $\ell=\chi(G)$. Certainly, we note that the graph $G'=K_1+G$ requires a chromatic colouring $(c_1,c_2,c_3,\dots,c_{\ell},c_{\ell+1})$. Without loss of generality, let $c(K_1) = c_{\chi(G)+1}$. Obviously, any vertex $v\in V(G)$ that yields a rainbow neighbourhood in $G$ will include the colour $c_{\ell+1}$ in its new closed neighbourhood in $K_1+G$. Hence, a vertex $v \in V(G)$ which yields rainbow neighbourhood in $G$, yields a rainbow neighbourhood in $K_1+G$ also. 

On the other hand, a vertex $u \in V(G)$ that does not yield a rainbow neighbourhood in $G$ will include the colour $c_{\ell+1}$ in its new closed neighbourhood in $K_1+G$, but not all colours in $(c_1,c_2,c_3,\dots,c_{\chi(G)},c_{\ell+1})$. Hence, those vertices $u \in V(G)$, which do not yield rainbow neighbourhoods in $G$, cannot yield rainbow neighbourhoods in the graph $K_1+G$.

Finally, we note that the closed neighbourhood $N[K_1]$ consists of at least one coloured vertex of each colour in $(c_1,c_2,c_3,\ldots,c_{\ell},c_{\ell+1})$. Therefore, $r_\chi(K_1 + G) = 1 + r_\chi(G)$.
\end{proof}

Let $\mathcal{V}(G)$ be the collection of all subsets of the vertex set $V(G)$. Select any number, say $k$, of non-empty subsets $W_i\in \mathcal{V}(G)$ such that $\bigcup\limits_{i=1}^{k} W_i=V(G)$, with repetition of selection allowed. For the additional vertices $u_1,u_2,u_3, \ldots,u_k$, add the additional edges $u_iv_j$, $\forall v_j \in W_j$. The resultant graph is called a \textit{Chithra graph} of the given graph $G$, denoted by $\mathfrak{C}(G)$ (see \cite{KS1}). Hence, we have the following theorem.

\begin{theorem}\label{Lem-2.4}
If say, $t$ copies of $K_1$ are joined to $G$ to obtain a Chithra graph of $G$, then $r_\chi(t\cdot K_1 + G) = t + r_\chi(G)$.
\end{theorem}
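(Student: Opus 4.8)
The plan is to generalise the argument of Lemma~\ref{Lem-2.3}, the only new feature being that the $t$ added vertices now form an independent set rather than a single vertex. First I would fix a chromatic colouring $(c_1,c_2,\ldots,c_\ell)$ of $G$ with $\ell=\chi(G)$ and denote the added vertices by $u_1,u_2,\ldots,u_t$. Since each $u_i$ is adjacent to every vertex of $G$, no $u_i$ may reuse any of the colours $c_1,\ldots,c_\ell$; but since the vertices $u_1,\ldots,u_t$ are pairwise non-adjacent in $t\cdot K_1+G$, they may all be assigned a single new colour $c_{\ell+1}$. This shows at once that $\chi(t\cdot K_1+G)=\ell+1$ and produces an explicit chromatic colouring of $G'=t\cdot K_1+G$ which I will use for the count.

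Next I would partition $V(G')$ into the added vertices and the original vertices and count rainbow neighbourhoods in each part. For an added vertex, $N[u_i]=\{u_i\}\cup V(G)$, and because a chromatic colouring of $G$ exhibits every one of the colours $c_1,\ldots,c_\ell$ on $V(G)$, the set $N[u_i]$ meets all of $c_1,\ldots,c_\ell,c_{\ell+1}$; hence each of the $t$ added vertices yields a rainbow neighbourhood. For an original vertex $v\in V(G)$ I would note that $N_{G'}[v]=N_G[v]\cup\{u_1,\ldots,u_t\}$, so $N_{G'}[v]$ always contains the colour $c_{\ell+1}$, while its intersection with $\{c_1,\ldots,c_\ell\}$ is exactly that of $N_G[v]$. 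Consequently $v$ yields a rainbow neighbourhood in $G'$ if and only if it does so in $G$, which contributes precisely $r_\chi(G)$ vertices. Adding the two contributions gives $r_\chi(t\cdot K_1+G)=t+r_\chi(G)$.

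The step I expect to be the only real subtlety is the claim that the chromatic number rises by exactly one rather than by $t$: this is precisely where the independence of $\{u_1,\ldots,u_t\}$ is used, and where the construction differs from iterating Lemma~\ref{Lem-2.3}, since $K_1+(K_1+G)$ would make the two added vertices adjacent and force two extra colours. A minor point deserving an explicit word is that a chromatic colouring necessarily uses each of $c_1,\ldots,c_\ell$ on $V(G)$, which is exactly what guarantees that every $N[u_i]$ is genuinely rainbow. Finally, since whether a closed neighbourhood is rainbow depends only on the set of colours it meets and not on their labels, any reordering of the colour classes needed to respect the rainbow neighbourhood convention leaves the count unchanged, so the equality $r_\chi(t\cdot K_1+G)=t+r_\chi(G)$ follows.
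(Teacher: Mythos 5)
Your proof is correct and is essentially the paper's own argument: the paper disposes of this theorem in a single line, declaring it an immediate consequence of Lemma~\ref{Lem-2.3}, and your write-up is exactly that lemma's argument rerun with all $t$ added vertices sharing the single new colour $c_{\ell+1}$, each $u_i$ seeing every colour on $V(G)$, and each original vertex gaining only $c_{\ell+1}$ so that it is rainbow in $t\cdot K_1+G$ if and only if it was rainbow in $G$. Your closing observation --- that one cannot literally iterate Lemma~\ref{Lem-2.3}, since $K_1+(K_1+G)$ makes the two added vertices adjacent and would raise the chromatic number by two --- identifies a genuine subtlety that the paper's one-line citation glosses over, and it is precisely the point where the independence of the $t$ added vertices is needed.
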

\begin{proof}
The result is an immediate consequence of Lemma \ref{Lem-2.3}.
\end{proof}
	 
\begin{theorem}
Consider two graphs $G$ and $H$ of order $n_1,n_2$ respectively.
\begin{enumerate}\itemsep0mm
\item[(i)] $\begin{cases}
r_\chi(G\cup H)=r_\chi(G)+r_\chi(H); & \text{if}\quad \chi(G)=\chi(H);\\
r_\chi(G\cup H) < r_\chi(G) + r_\chi(H); & \text{otherwise}.
\end{cases}$
\item[(ii)] $r_\chi(G+H)=r_\chi(G)+r_\chi(H)$.

\item[(iii)] $\begin{cases}
r_\chi(G\circ H)=n_1(1+ r_\chi(H)); & \text{if}\quad \chi(H)\geq \chi(G)-1;\\
r_\chi(G\circ H) = r_\chi(G); & \text{otherwise}.
\end{cases}$
\end{enumerate}
\end{theorem}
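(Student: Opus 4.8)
The plan is to dispatch the three operations one at a time, in each case first fixing the chromatic number of the resulting graph and then reading off how the rainbow-neighbourhood convention distributes the colours, so that counting rainbow neighbourhoods reduces to counting them inside the factors $G$ and $H$. For part (i) I would start from $\chi(G\cup H)=\max\{\chi(G),\chi(H)\}$. When $\chi(G)=\chi(H)=\ell$, the two components are coloured independently with the same $\ell$ colours, and since the closed neighbourhood of any vertex lies wholly inside its own component, a vertex is rainbow in $G\cup H$ exactly when it is rainbow in its component; summing gives $r_\chi(G\cup H)=r_\chi(G)+r_\chi(H)$. When the chromatic numbers differ, say $\chi(G)>\chi(H)$, we have $\chi(G\cup H)=\chi(G)=\ell$, and the key observation is that under the convention the rarest colour $c_\ell$ is used as sparingly as possible, so it never appears on $H$ (which needs only $\chi(H)<\ell$ colours). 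Hence no vertex of $H$ sees all $\ell$ colours, so $H$ contributes nothing while $G$ still contributes $r_\chi(G)$; since $r_\chi(H)\ge\chi(H)\ge 1$ this gives the strict inequality $r_\chi(G\cup H)=r_\chi(G)<r_\chi(G)+r_\chi(H)$.

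Part (ii) is the cleanest. In the join every vertex of $G$ is adjacent to every vertex of $H$, so the colour sets used on $G$ and on $H$ are forced to be disjoint and $\chi(G+H)=\chi(G)+\chi(H)$. For $v\in V(G)$ the closed neighbourhood already contains all of $V(H)$, hence all $\chi(H)$ colours of $H$ automatically; thus $v$ is rainbow in $G+H$ iff $N_G[v]$ contains all $\chi(G)$ colours of $G$, i.e. iff $v$ is rainbow in $G$. The symmetric statement holds for $H$, and since each colour class lies entirely in one factor the induced colourings on $G$ and on $H$ keep their classes sorted by size, so they inherit the convention and the two contributions are exactly $r_\chi(G)$ and $r_\chi(H)$, giving $r_\chi(G+H)=r_\chi(G)+r_\chi(H)$.

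For part (iii) I would write $G\circ H$ as $G$ together with private copies $H_1,\dots,H_{n_1}$, where $v_i$ is joined to all of $H_i$, and use $\chi(G\circ H)=\max\{\chi(G),\chi(H)+1\}$. If $\chi(H)\ge\chi(G)-1$ then $\chi(G\circ H)=\chi(H)+1$, and each unit $\langle\{v_i\}\cup V(H_i)\rangle\cong K_1+H$ lets me invoke Lemma \ref{Lem-2.3}: $v_i$ sees its own colour together with all $\chi(H)$ colours of $H_i$, so every $v_i$ is rainbow, while inside each $H_i$ precisely the $r_\chi(H)$ vertices that are rainbow in $H$ stay rainbow (the missing colour being supplied by $v_i$), which totals $n_1(1+r_\chi(H))$. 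If instead $\chi(H)<\chi(G)-1$ then $\chi(G\circ H)=\chi(G)$, every vertex of $H_i$ sees at most $\chi(H)+1<\chi(G)$ colours and so is never rainbow, and one is left to count only the $v_i$'s against the target $r_\chi(G)$.

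I expect this last case of part (iii) to be the main obstacle. The difficulty is that $N[v_i]$ picks up, beyond the $G$-colours of $N_G[v_i]$, the $\chi(H)$ colours carried by the pendant copy $H_i$, so one must rule out the possibility that these pendant colours complete a colour set that $N_G[v_i]$ was missing --- otherwise a $v_i$ that is not rainbow in $G$ could become rainbow in $G\circ H$ and push the count above $r_\chi(G)$. Pinning this down requires a careful description of how the convention colours the copies $H_i$ relative to the colour of $v_i$, and it is the step I would invest the most effort in; the union and join arguments, by contrast, should go through with only the bookkeeping sketched above.
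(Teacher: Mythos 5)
Your treatment of parts (i) and (ii) and of the first case of part (iii) coincides with the paper's own proof: for the union you split over components under a common palette, with the component of smaller chromatic number contributing nothing in the unequal case; for the join you use disjoint palettes so that every vertex of one factor sees all colours of the other factor for free, making a vertex rainbow in $G+H$ precisely when it is rainbow in its own factor; and in the first corona case you reduce each unit $\langle\{v_i\}\cup V(H_i)\rangle \cong K_1+H$ to Lemma \ref{Lem-2.3}, which is exactly what the paper does (it phrases this as recolouring the class of $c(v_i)$ inside $H_i$ to a new colour and then citing the lemma). Nothing in those three arguments departs from the paper.

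The step you flag as the main obstacle in the second corona case --- ruling out that a vertex $v_i$ which is \emph{not} rainbow in $G$ becomes rainbow in $G\circ H$ because $N[v_i]$ absorbs the $\chi(H)$ colours of its private copy $H_i$ --- is a genuine gap, but you should know that the paper does not close it either: its proof records only that $G$-rainbow vertices remain rainbow (giving $r_\chi(G\circ H)\geq r_\chi(G)$) and that no vertex of any copy of $H$ can be rainbow, and then concludes equality, which is precisely the non sequitur you identified. The missing argument is short, and you were closer to it than the paper. In this case $\chi(H)\leq\chi(G)-2$ and $\chi(G\circ H)=\chi(G)$. If $v_i$ is not rainbow in $G$, choose a colour $c_m\neq c(v_i)$ absent from $N_G[v_i]$; the palette contains $\chi(G)-2\geq\chi(H)$ colours distinct from both $c(v_i)$ and $c_m$, so $H_i$ admits a proper colouring avoiding both, whence $N[v_i]$ still misses $c_m$ in $G\circ H$. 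Since the paper's rainbow neighbourhood convention is taken to yield the minimum number of rainbow neighbourhoods, this colouring certifies that no new rainbow vertices arise among the $v_i$, and together with the two facts already established it gives $r_\chi(G\circ H)=r_\chi(G)$. With that paragraph added, your proof is complete and, on this point, more careful than the paper's.
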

\begin{proof} Let $\chi(G)=\ell_1$ and $\chi(H)=\ell_2$. Then, 

\vspace{0.2cm}
	
\ni \textit{Part (i):} If $\chi(G)=\chi(H)=\ell$, then both permit the same chromatic colouring on colours $(c_1,c_2,c_3,\dots,c_\ell)$. Therefore, in the disjoint union of $G$ and $H$, the operation cannot increase or decrease the respective values, $r_{\ell_1}$ and $r_{\ell_2}$. Hence, the result follows. Next, without loss of generality, let $\chi(G)<\chi(H)$, then no vertex $v\in V(G)$ can be adjacent to at least one of each colour found in $(c_1,c_2,c_3,\dots,c_{\ell_2})$. Therefore, $r_\chi(G\cup H) < r_\chi(G)+r_\chi(H)$.

\vspace{0.2cm}

\ni \textit{Part (ii):} Let the colouring $(c_1,c_2,c_3,\dots,c_{\ell_1})$ be a chromatic colouring of $G$ and $(c_{\ell_1+1},c_{\ell_1+2},c_{\ell_1+3},\ldots, c_{\ell_1+\ell_2})$ be a chromatic colouring of $H$. Then, we note that the colouring $(c_1,c_2,c_3,\ldots,c_{\ell_1},c_{\ell_1+1},c_{\ell_1+2},c_{\ell_1+3}, \ldots, c_{\ell_1+\ell_2})$ is a chromatic colouring of the graph $G+H$. Clearly, any vertex $v \in V(G)$ that yields a rainbow neighbourhood in $G$ also yields a rainbow neighbourhood in $G+H$ and vice versa. Also, any vertex $u \in V(G)$ that does not yield a rainbow neighbourhood in $G$, cannot yield a rainbow neighbourhood in $G+H$ and vice versa. Therefore, the result follows.

\vspace{0.2cm}

\ni \textit{Part (iii):} For any vertex $v \in V(G)$ with $c(v)=c_i$ recolour all vertices $u \in V(H)$, which have $c(u)=c_i$ to the colour $c_{\chi(H)+1}$. Then the first part of the result, $\chi(H) \geq \chi(G)-1$, is a direct consequence of Lemma \ref{Lem-2.3}.

\vspace{0.2cm}

Otherwise, it is clear that all vertices $v \in V(G)$ that yield a rainbow neighbourhood will yield a rainbow neighbourhood in $G\circ H$. Therefore, $r_\chi(G\circ H) \geq r_\chi(G)$. But, no vertex $w\in V(H)$ can yield a rainbow neighbourhood in $G\circ H$. This can be verified as follows. Assume that a vertex $w \in V(H)$ of the $t$-th copy of $H$ joined to $v \in V(G)$ is a vertex yielding a rainbow neighbourhood in $G\circ H$.  It means that vertex $w$ has at least one neighbour for each colour $c_i,\ 1 \leq i \leq \ell_2<\ell_1-1$ as well as the neighbour $v$ with the colour $c(v)=c_{\ell_2+1}$. Since, $c_{\ell_2+1}$ can at best be the colour $c_{\ell_1-1}$, the colour $c_{\ell_1} \notin N[w]$ in $r_\chi(G\circ H)$ which is a contradiction. Therefore, $r_\chi(G\circ H)=r_\chi(G)$. 
\end{proof}

\begin{corollary}
If $\chi(H)\geq \chi(G)-1$, then $\chi(G\circ H)=\chi(H)+1$ and $\chi(G\circ H) =\chi(G)$, otherwise.
\end{corollary}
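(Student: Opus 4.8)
The plan is to show that in every case $\chi(G\circ H)=\max\{\chi(G),\,\chi(H)+1\}$ and then to read off the two stated branches. First I would establish the lower bound by locating two subgraphs. Since $G$ sits inside $G\circ H$ as an induced subgraph, $\chi(G\circ H)\geq \chi(G)$. Moreover, for each vertex $v\in V(G)$ together with its attached copy $H_v$ of $H$, the induced subgraph $\langle \{v\}\cup V(H_v)\rangle$ is exactly $K_1+H$; the colouring observation in the proof of Lemma \ref{Lem-2.3} gives $\chi(K_1+H)=\chi(H)+1$, so $\chi(G\circ H)\geq \chi(H)+1$. Together these yield $\chi(G\circ H)\geq \max\{\chi(G),\,\chi(H)+1\}$.

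Next I would exhibit a proper colouring of $G\circ H$ using exactly $\max\{\chi(G),\,\chi(H)+1\}$ colours, which forces equality. Fix a palette of this size and colour $G$ properly, which is possible since the palette has at least $\chi(G)$ colours. For each anchor vertex $v$ with assigned colour $c(v)$, colour its copy $H_v$ by a proper $\chi(H)$-colouring taken from the palette with $c(v)$ removed. The crucial structural fact is that distinct copies $H_v$ and $H_w$ are non-adjacent in the corona, and each copy $H_v$ meets $V(G)$ only through its single anchor $v$; hence the sole cross constraint is that the colours on $H_v$ avoid $c(v)$, which the construction guarantees. It remains only to check that enough colours survive the deletion of $c(v)$. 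In the branch $\chi(H)\geq\chi(G)-1$ the palette has $\chi(H)+1$ colours, leaving exactly $\chi(H)$; in the branch $\chi(H)<\chi(G)-1$ it has $\chi(G)$ colours, leaving $\chi(G)-1\geq\chi(H)+1>\chi(H)$. In both situations at least $\chi(H)$ colours remain, so the required proper colouring of each $H_v$ exists.

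Finally I would split into cases. When $\chi(H)\geq\chi(G)-1$, equivalently $\chi(H)+1\geq\chi(G)$, the maximum equals $\chi(H)+1$ and $\chi(G\circ H)=\chi(H)+1$; otherwise $\chi(H)+1<\chi(G)$ and the maximum equals $\chi(G)$, giving $\chi(G\circ H)=\chi(G)$. I do not expect a serious obstacle, since the computation is elementary once the corona's adjacency structure is invoked; the only point needing care is the palette count in the upper bound, which is immediate from $\chi(H)\leq\chi(G)-2$ in the second branch.
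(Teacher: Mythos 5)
Your proposal is correct, and it is in fact more complete than what the paper provides: the paper states this corollary with no proof at all, leaving the reader to extract it from the colouring construction inside the proof of the preceding theorem on $r_\chi(G\circ H)$ (where, in part (iii), vertices of each copy of $H$ sharing the anchor's colour are recoloured to a new colour $c_{\chi(H)+1}$ in the one case, and the existing colours of $G$ are implicitly reused in the other). Your route differs in a worthwhile way: you prove the unified closed form $\chi(G\circ H)=\max\{\chi(G),\chi(H)+1\}$ by a two-sided argument --- the lower bound from the induced subgraphs $G$ and $\langle\{v\}\cup V(H_v)\rangle\cong K_1+H$ (correctly invoking the observation behind Lemma \ref{Lem-2.3} that $\chi(K_1+H)=\chi(H)+1$), and the upper bound from an explicit palette of $\max\{\chi(G),\chi(H)+1\}$ colours in which each copy $H_v$ is coloured avoiding only its anchor's colour $c(v)$. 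The palette count in both branches is verified correctly, and the two stated cases then follow by comparing $\chi(H)+1$ with $\chi(G)$. What your approach buys is a genuine proof of necessity (the lower bound), which the paper's implicit recolouring argument never addresses, plus a single formula subsuming both branches; what the paper's route buys is merely economy, piggybacking on a construction it already needed for the rainbow neighbourhood count.
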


Note that the number of rainbow neighbourhoods of a graph is independent from degree parameters or number of edges or number of vertices. Also, the size of the rainbow neighbourhoods may differ. 

\vspace{0.2cm}

To illustrate these observations, consider the cycle $C_5$ and label the vertices consecutively $v_1,v_2,v_3,v_4,v_5$. Any triple of consecutive vertices is a rainbow neighbourhood and each vertex hex has degree $2$. Consider the chromatic colouring $c(v_1)=c_1,c(v_2)=c_2,c(v_3)=c_1,c(v_4)=c_2,c(v_5)=c_3$. Attach any finite number $\ell \in \N$ of pendant vertices to say, $v_1$ to obtain the thorn cycle $C^\star_5$ and colour the pendants $c_2$. In the thorn cycle, the closed neighbourhood $N[v_1]$ is a rainbow neighbourhood and $d(v_1)=\ell+2$. The closed neighbourhood $N[v_4]$ is also a rainbow neighbourhood and all vertices have degree $2$. 

\vspace{0.2cm}

We observe that the Petersen graph, denoted $PG$, has $\chi(PG)=3$ and has the interesting property that only one vertex does not yield a rainbow neighbourhood. Hence, $r_\chi(PG) = 9$ whilst $|V(PG)|= 10$.

\section{On sum and product of rainbow neighbourhood number of a graph and its line graph}

We begin by presenting results on the sum and product of $r_\chi(G)$ and $r_\chi(L(G))$ for certain well known graph classes.

\begin{proposition}\label{Prop-2.6a}
For $n\ge 2$, $r_\chi(P_n) + r_\chi(L(P_n)) = 2n-1$ and $r_\chi(P_n)\cdot r_\chi(L(P_n)) = n(n-1)$.
\end{proposition}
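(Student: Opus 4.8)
The plan is to reduce everything to Proposition \ref{Prop-2.1a}, which already supplies $r_\chi(P_n)=n$. The only genuinely new ingredient needed is an explicit identification of the line graph $L(P_n)$. Writing $V(P_n)=\{v_1,v_2,\dots,v_n\}$ with edges $e_i=v_iv_{i+1}$ for $1\le i\le n-1$, I would first record that $P_n$ has exactly $n-1$ edges, so $L(P_n)$ has $n-1$ vertices, one for each $e_i$. Two such vertices are adjacent precisely when the corresponding edges of $P_n$ share an endpoint; since $e_i$ and $e_j$ share a vertex if and only if $|i-j|=1$, the adjacency structure of $L(P_n)$ is exactly that of a path on $n-1$ vertices. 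Hence $L(P_n)\cong P_{n-1}$.

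With this identification in hand, I would apply Proposition \ref{Prop-2.1a} a second time. Since $n\ge 2$ we have $n-1\ge 1$, so the proposition applies to $P_{n-1}$ and gives $r_\chi(L(P_n))=r_\chi(P_{n-1})=n-1$. Combining this with $r_\chi(P_n)=n$ then yields the sum $r_\chi(P_n)+r_\chi(L(P_n))=n+(n-1)=2n-1$ and the product $r_\chi(P_n)\cdot r_\chi(L(P_n))=n(n-1)$, exactly as claimed.

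I do not anticipate any serious obstacle: once the isomorphism $L(P_n)\cong P_{n-1}$ is noted, the argument is essentially a bookkeeping computation. The one point deserving a word of care is the boundary case $n=2$, where $L(P_2)\cong P_1=K_1$ and $r_\chi(L(P_2))=1$; the formulas still read $r_\chi(P_2)+r_\chi(L(P_2))=2+1=3=2\cdot 2-1$ and $r_\chi(P_2)\cdot r_\chi(L(P_2))=2\cdot 1=2=2(2-1)$, which confirms that the stated range $n\ge 2$ is precisely the range in which the reduction to $P_{n-1}$ is meaningful (for $n=1$ the graph $P_1$ has no edges and $L(P_1)$ is empty).
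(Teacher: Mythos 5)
Your proposal is correct and follows exactly the same route as the paper: identify $L(P_n)\cong P_{n-1}$, invoke Proposition \ref{Prop-2.1a} to get $r_\chi(P_n)=n$ and $r_\chi(L(P_n))=n-1$, and compute the sum and product. The only difference is that you spell out the isomorphism and the boundary case $n=2$ explicitly, which the paper leaves implicit.
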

\begin{proof}
Since $L(P_n)=P_{n-1}$, we have $r_\chi(P_n)=n$ and $r_\chi(L(P_n))=r_\chi(P_{n-1})=n-1$. Then, the result is immediate.
\end{proof}

\begin{proposition}\label{Prop-2.6b}
For $n\ge 3$, we have 
\begin{enumerate}\itemsep0mm 
\item[(i)] $r_\chi(C_n) + r_\chi(L(C_n))=
\begin{cases}
6, & \text {if $n$ is odd},\\
2n, & \text {if $n$ is even}.
\end{cases}$ 

\item[(ii)] $r_\chi(C_n)\cdot r_\chi(L(C_n)) =
\begin{cases}
9, & \text {if $n$ is odd},\\
n^2, & \text {if $n$ is even.}
\end{cases}$
\end{enumerate}
\end{proposition}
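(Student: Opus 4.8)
The plan is to reduce everything to the single structural fact that the line graph of a cycle is again a cycle of the same order, that is $L(C_n) \cong C_n$ for every $n \geq 3$, and then to invoke Proposition \ref{Prop-2.1b} twice.

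First I would establish the isomorphism $L(C_n) \cong C_n$. Label the vertices of $C_n$ consecutively as $v_1, v_2, \dots, v_n$ (indices taken modulo $n$) so that its edges are $e_i = v_i v_{i+1}$. In $L(C_n)$ the vertices are precisely these edges, and two edges $e_i, e_j$ are adjacent exactly when they share an endpoint in $C_n$. Since each $e_i$ meets only $e_{i-1}$ and $e_{i+1}$, the adjacency structure on $\{e_1, e_2, \dots, e_n\}$ is itself an $n$-cycle. Hence $L(C_n) \cong C_n$, and in particular $r_\chi(L(C_n)) = r_\chi(C_n)$.

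Second, I would simply substitute the values supplied by Proposition \ref{Prop-2.1b}. For odd $n$ both $r_\chi(C_n)$ and $r_\chi(L(C_n))$ equal $3$, so their sum is $6$ and their product is $9$; for even $n$ both equal $n$, so the sum is $2n$ and the product is $n^2$. This delivers the two displayed piecewise formulas in (i) and (ii) simultaneously.

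I do not anticipate a genuine obstacle here: the only content is the isomorphism $L(C_n) \cong C_n$, which is elementary and holds uniformly in $n$, after which the statement is pure arithmetic driven by the already-established value of $r_\chi(C_n)$. If anything, the sole point demanding a line of care is the boundary case $n = 3$, where $C_3 = K_3$ is at once a cycle and a complete graph; but Proposition \ref{Prop-2.1b} (and equally Proposition \ref{Prop-2.1c}) already records $r_\chi(C_3) = 3$, so the odd-$n$ branch subsumes it without any special treatment.
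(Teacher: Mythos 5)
Your proposal is correct and follows exactly the paper's own argument: both rest on the identity $L(C_n)\cong C_n$ and then substitute the values from Proposition \ref{Prop-2.1b}. The only difference is that you spell out the isomorphism explicitly, which the paper takes for granted.
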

\begin{proof}
Since $L(C_n)=C_n$, we have  $r_\chi(L(C_n))=r_\chi(C_n)=n$ if $n$ is even and $r_\chi(L(C_n))=r_\chi(C_n)=3$ if $n$ is odd. Then, the result is straight forward.
\end{proof}

\begin{proposition}\label{Prop-2.6c}
Ladders, $L_n$, $n \geq 3$ that: $r_\chi(L_n) +r_\chi(L(L_n)) = 5n -4$ and $r_\chi(L_n)\cdot r_\chi(L(L_n))  = 2n(3n -4)$.
\end{proposition}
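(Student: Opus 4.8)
The plan is to lean on Proposition~\ref{Prop-2.1f}, which already supplies $r_\chi(L_n)=2n$, so that the whole proposition collapses to one computation, namely $r_\chi(L(L_n))=3n-4$; once that is secured, the stated sum $2n+(3n-4)=5n-4$ and product $2n(3n-4)$ are immediate arithmetic. First I would fix notation for the vertices of $L(L_n)$. Writing the rail edges of the two poles as $a_i=v_{1,i}v_{1,i+1}$ and $b_i=v_{2,i}v_{2,i+1}$ for $1\le i\le n-1$, and the rung edges as $r_i=v_{1,i}v_{2,i}$ for $1\le i\le n$, the line graph has the $3n-2$ vertices $\{a_i\}\cup\{b_i\}\cup\{r_i\}$, two of them being adjacent exactly when the corresponding edges of $L_n$ meet. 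I would then record the adjacencies once and for all: $a_i\sim a_{i\pm1}$ and $b_i\sim b_{i\pm1}$; each $a_i$ (resp.\ $b_i$) is joined to $r_i$ and $r_{i+1}$; while $a_i$ and $b_j$ are never adjacent and distinct rungs are never adjacent.

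Next I would settle the chromatic number. Since $L_n=P_n\Box P_2$ is bipartite with maximum degree $3$ for $n\ge 3$, the edge-colouring theorem of K\"onig gives $\chi'(L_n)=3$, hence $\chi(L(L_n))=3$. I would then exhibit an explicit chromatic colouring obeying the rainbow neighbourhood convention: give colour $c_1$ to every rung $r_i$, and colour the rails alternately by setting $c(a_i)=c(b_i)=c_2$ for odd $i$ and $c(a_i)=c(b_i)=c_3$ for even $i$. The adjacency list above shows this colouring is proper, and a short tally of the colour classes confirms that $c_1$ is assigned to the (tied) maximum number of vertices, so the convention is honoured.

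The core of the argument is then the count of vertices yielding rainbow neighbourhoods under this colouring. For every rail vertex $a_i$ the closed neighbourhood contains a rung of colour $c_1$ together with two consecutive rail edges of opposite parity, hence all three colours; the same reasoning applies to every $b_i$, contributing $2(n-1)$ rainbow neighbourhoods. For an interior rung $r_i$ with $2\le i\le n-1$, the set $\{r_i,a_{i-1},a_i,b_{i-1},b_i\}$ contains $c_1$ and, because $i-1$ and $i$ have opposite parity, both $c_2$ and $c_3$, adding a further $n-2$ rainbow neighbourhoods. I expect the delicate point, and the main obstacle, to be the two boundary rungs $r_1$ and $r_n$: their closed neighbourhoods are only $\{r_1,a_1,b_1\}$ and $\{r_n,a_{n-1},b_{n-1}\}$, in each of which the two rail edges share the same parity and hence the same colour, so exactly one of $c_2,c_3$ is absent and neither rung yields a rainbow neighbourhood. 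Summing gives $r_\chi(L(L_n))=2(n-1)+(n-2)=3n-4$, which together with $r_\chi(L_n)=2n$ produces both stated identities. The only real care required is the parity bookkeeping at the two ends, plus the observation that the convention pins this count down.
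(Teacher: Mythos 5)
Your proposal is correct and follows the same route as the paper: invoke Proposition~\ref{Prop-2.1f} for $r_\chi(L_n)=2n$, analyse the structure of $L(L_n)$, and conclude $r_\chi(L(L_n))=3n-4$, whence the sum and product follow. The only difference is one of rigour, in your favour: where the paper simply points to a figure of the ``layered'' line graph and declares the count immediate, you supply the missing details --- the explicit adjacency structure of $L(L_n)$, the fact that $\chi(L(L_n))=\chi'(L_n)=3$ by K\"onig's theorem, a convention-respecting $3$-colouring, and the boundary-versus-interior rung analysis that accounts for exactly the $2$ non-rainbow vertices giving $2(n-1)+(n-2)=3n-4$.
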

\begin{proof}
In view of Proposition \ref{Prop-2.1f}, we have that $r_\chi(L_n)= 2n$. By replacing each edge along the two poles, respectively and along the steps, the line graph is a $n-2$ layered graph as depicted in Figure \ref{fig:fig-1}.
	
\begin{figure}[h!]
\centering
\includegraphics[width=0.4\linewidth]{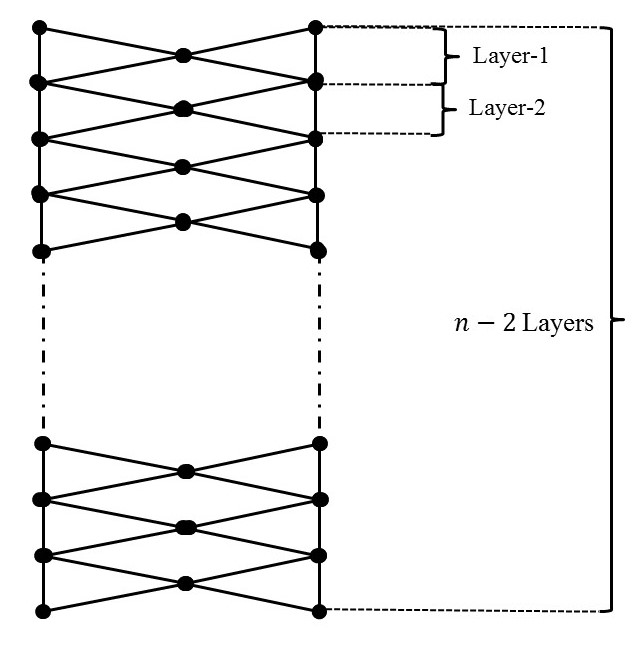}
\caption{}\label{fig:fig-1}
\end{figure}
	
It immediately follows that $r_\chi(L(L_n))=3n-4$. Therefore, $r_\chi(L_n) + r_\chi(L(L_n))=5n-4$ and $r_\chi(L_n)\cdot r_\chi(L(L_n))=2n(3n-4)$.
\end{proof}

\begin{proposition}\label{Prop-2.6d}
For $n\ge 3$, we have 
\begin{enumerate}
\item[(i)] $r_\chi(W_n) + r_\chi(L(W_n)) =
\begin{cases}
10, & \text {if $n = 3$},\\
n+4, & \text {if $n \geq 5$ and odd,}\\
2n+1, & \text {if $n \geq 4$ and even}.
\end{cases}$ 
\item[(ii)] $r_\chi(W_n)\cdot r_\chi(L(W_n)) =
\begin{cases}
24, &\text {if $n=3$},\\
4n, &\text {if $n \geq 5$ and odd,}\\
n(n+1), &\text {if $n \geq 4$ and even.}
\end{cases}$	
\end{enumerate}
\end{proposition}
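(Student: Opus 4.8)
\ni The plan is to evaluate $r_\chi(W_n)$ and $r_\chi(L(W_n))$ separately and then read off both the sum and the product. Throughout I use the convention $W_n=C_n+K_1$, so that $W_3=K_4$. For the first quantity I would argue exactly as in Proposition \ref{Prop-2.1d}: the hub is universal and hence always yields a rainbow neighbourhood, while for even $n$ one has $\chi(W_n)=3$ and every rim vertex sees both rim colours together with the hub colour, so all $n$ rim vertices are rainbow and $r_\chi(W_n)=n+1$; for odd $n$ one has $\chi(W_n)=4$ and only the three rim vertices surrounding the unique $c_3$-coloured rim vertex qualify, giving $r_\chi(W_n)=4$ (consistently, $W_3=K_4$ gives $4$). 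The whole difficulty thus lies in $r_\chi(L(W_n))$.

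\ni For $n\ge 4$ I would fix the following description of $L(W_n)$. Writing $s_i=uv_i$ for the spokes and $e_i=v_iv_{i+1}$ (indices modulo $n$) for the rim edges, the adjacencies are: the spokes $s_1,\dots,s_n$ form a clique $K_n$, since they all share the hub $u$; the rim edges $e_1,\dots,e_n$ form a cycle $C_n$; and each $e_i$ is adjacent to precisely the two spokes $s_i,s_{i+1}$ (equivalently $s_i\sim e_{i-1},e_i$). Because $L(W_n)\supseteq K_n$ and $\chi(L(W_n))=\chi'(W_n)=n$, the spoke-clique must carry all $n$ colours in any chromatic colouring. Hence $N[s_i]\supseteq\{s_1,\dots,s_n\}$ meets every colour class, so each of the $n$ spokes yields a rainbow neighbourhood. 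This already contributes $n$, and it remains only to decide which rim edges, if any, contribute.

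\ni The decisive observation is that $N[e_i]=\{e_i,e_{i-1},e_{i+1},s_i,s_{i+1}\}$ contains at most five vertices and therefore meets at most five colour classes. Thus whenever $n\ge 6$ we have $|N[e_i]|\le 5<n$, no rim edge can realise all $n$ colours, and consequently $r_\chi(L(W_n))=n$ for every $n\ge 6$. This disposes of all large $n$ at a stroke and reduces the statement to the small cases $n\in\{3,4,5\}$, which I expect to be the main obstacle: here $|N[e_i]|$ is comparable to, or exceeds, $n$, so a rim edge can in principle become rainbow and the count must be pinned down directly on the rainbow-neighbourhood-convention colouring. For $n=3$ the spoke/rim dichotomy collapses, since $W_3=K_4$ and $L(K_4)=K_{2,2,2}$ is the octahedron; Proposition \ref{Prop-2.1e} then gives $r_\chi(L(W_3))=6$. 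For $n=5$, where $|N[e_i]|=5=n$, I would take the convention colouring (spokes coloured $1,\dots,5$ and the rim edges filled in greedily subject to $c(e_i)\notin\{c(s_i),c(s_{i+1})\}$ and to properness around the rim cycle) and check case by case that no closed rim neighbourhood attains all five colours, yielding $r_\chi(L(W_5))=5$. The case $n=4$ is the genuinely delicate one: here $|N[e_i]|=5>4$, so the two spoke colours flanking $e_i$ interact with the rim colours in a way that requires a careful hand verification of the convention colouring, and this is precisely the step where the claimed value must be established with particular care.

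\ni Granting these small-case computations together with $r_\chi(L(W_n))=n$ for $n\ge 5$, the three listed cases assemble by direct arithmetic. For odd $n\ge 5$, combining $r_\chi(W_n)=4$ with $r_\chi(L(W_n))=n$ gives sum $n+4$ and product $4n$; for even $n\ge 4$, combining $r_\chi(W_n)=n+1$ with $r_\chi(L(W_n))=n$ gives sum $2n+1$ and product $n(n+1)$; and for $n=3$, combining $r_\chi(W_3)=4$ with $r_\chi(L(W_3))=6$ gives sum $10$ and product $24$. These match the three branches of the proposition, so the result follows once the small-case rim-edge analysis above is carried out.
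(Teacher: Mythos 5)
Your structural setup is sound, and it is essentially the paper's own approach made explicit: the paper's proof likewise just describes $L(W_n)$ as a spoke clique $K_n$ with an outer ($n$-sun plus cycle) structure and then asserts that everything ``follows immediately'' from Proposition \ref{Prop-2.1e}. Your argument is in fact tighter for $n\ge 6$: the observation that $N[e_i]=\{e_{i-1},e_i,e_{i+1},s_i,s_{i+1}\}$ has only five vertices, hence cannot meet $n>5$ colour classes, while the spoke clique forces all $n$ spokes to yield rainbow neighbourhoods, is a clean proof the paper does not supply; the $n=3$ case via $L(K_4)=K_{2,2,2}$ and Proposition \ref{Prop-2.1e} is also fine. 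The genuine gap is that you never carry out the two small cases you yourself flag, and the one you call ``genuinely delicate,'' $n=4$, cannot be carried out: the verification you postpone must fail. In $L(W_4)$ the spokes form $K_4$ and receive four distinct colours, say $c(s_i)=i$; since $e_i\sim s_i,s_{i+1}$, properness forces $c(e_1)\in\{3,4\}$, $c(e_2)\in\{1,4\}$, $c(e_3)\in\{1,2\}$, $c(e_4)\in\{2,3\}$, and together with properness around the rim $C_4$ this leaves exactly two colourings, $(c(e_1),c(e_2),c(e_3),c(e_4))=(3,4,1,2)$ or $(4,1,2,3)$. In both, every $N[e_i]$ contains all four colours, and both colourings comply with the rainbow neighbourhood convention (every colour class has the maximum possible size $2$, as a maximum matching of $W_4$ has size $2$). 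Hence $r_\chi(L(W_4))=8$, giving sum $13$ and product $40$ rather than the stated $9$ and $20$: your plan cannot close the $n=4$ case because the claimed value is unattainable there, so deferring it is not a presentational shortcut but the precise point of failure.

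The deferred $n=5$ check can be completed, but not by the casual recipe you give: there is a proper $5$-colouring of $L(W_5)$, namely rim colours $(4,5,1,2,3)$ against spoke colours $(1,2,3,4,5)$, under which all ten vertices yield rainbow neighbourhoods. To land on $r_\chi(L(W_5))=5$ you must invoke the convention explicitly: the first colour class must be a maximum independent set of $L(W_5)$, i.e.\ a perfect matching of $W_5$ of size $3$, which rules out the all-rainbow colouring (whose classes all have size $2$) and leaves only colourings in which no rim edge sees all five colours. In short, your route coincides with the paper's for $n=3$ and $n\ge 6$ and is better justified there, but the proof is incomplete exactly where completeness matters, and at $n=4$ no completion exists.
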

\begin{proof}
The line graph of any wheel is structured with a complete graph $K_n$ as inner core. The inner core results from the spokes of the wheel since they are all incident with the central vertex. Each edge of the cycle of the wheel is adjacent to two other edges of the cycle as well as to spokes of the wheel, so the outer structure of the line graph is firstly, that of triangles to form a (complete) $n$-sun graph and secondly the \textit{tips} of the $n$-sun graph are linked to form a cycle $C_n$.

\vspace{0.2cm}

\ni The general structure of the line graph $L(W_n))$ is depicted in Figure \ref{fig:fig-2}.

\begin{figure}[h!]
\centering
\includegraphics[width=0.4\linewidth]{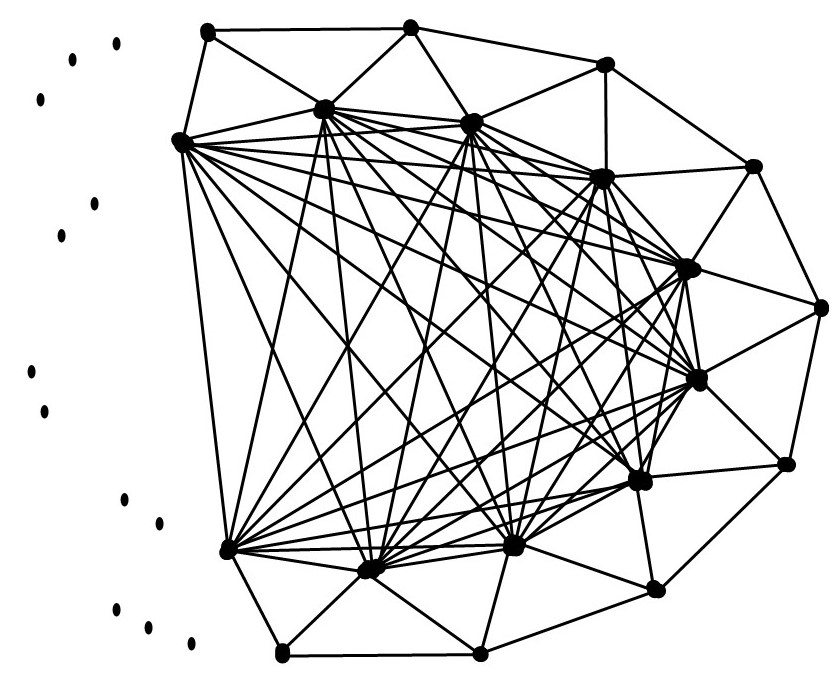}
\caption{}\label{fig:fig-2}
\end{figure}

\ni Hence, in conjunction with Proposition \ref{Prop-2.1e} all results follow immediately.
\end{proof}

Note that all the possible $\ell$-partite graph structures were not exhausted in Proposition \ref{Prop-2.6c}. It seems that further studies in this area are required. We introduce a new construction method to obtain a new graph associated to a given graph $G$, called the \textit{expanded line graph} of $G$, as follows.

\begin{enumerate}\itemsep0mm
\item[(a)] Label the edges of the graph $G$ as $e_1,e_2,e_3,\dots,e_{\varepsilon(G)}$.
\item[(b)]Replace each vertex $v\in V(G)$ with a complete graph $K_t$, $t=d_G(v)$ such that each distinct vertex of the complete graph is inserted into a distinct edge incident with vertex $v$. Hence, each edge $e_i \in E(G)$ will have two new vertices inserted. The complete graph $K_{d_G(v)}$ is called the \textit{$v$-clique} of vertex $v$.
\item[(c)]For each edge $e_i$, label the new inserted vertices $u_{i,1}$ and $u_{i,2}$.
\item[(d)]Connect the pairs of vertices $u_{i,1},u_{i,2}$ with a broken line edge.
\end{enumerate}

The expanded line graph of $G$ is denoted by $L^{\cdot \cdot}(G)$. Clearly by contracting all broken line edges hence, by merging all vertices $u_{i,1}$ and $u_{i,2}$ for $1\leq i \leq \varepsilon(G)$ we obtain the line graph $L(G)$.

\vspace{0.2cm}

Figure \ref{fig:fig-3} depict a graph $G$ and the corresponding expanded line graph $L^{\cdot \cdot}(G)$. Similar to the notion of a simple graph, it is important to note that two distinct cliques say, the $v$-clique and the $u$-clique can be linked by at most one broken line edge.

\begin{figure*}[h!]
\centering
\begin{subfigure}[b]{0.45\textwidth}
\centering
\includegraphics[height=2in]{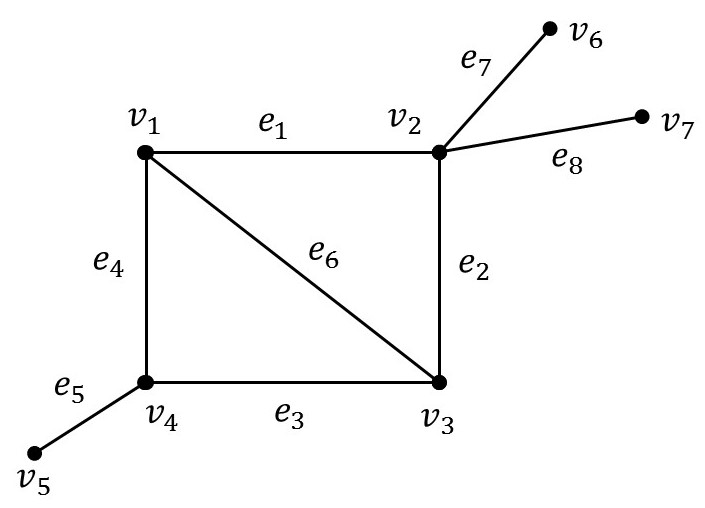}
\caption{$G$.}\label{fig:fig-1a}
\end{subfigure}%
\quad 
\begin{subfigure}[b]{0.45\textwidth}
\centering
\includegraphics[height=2in]{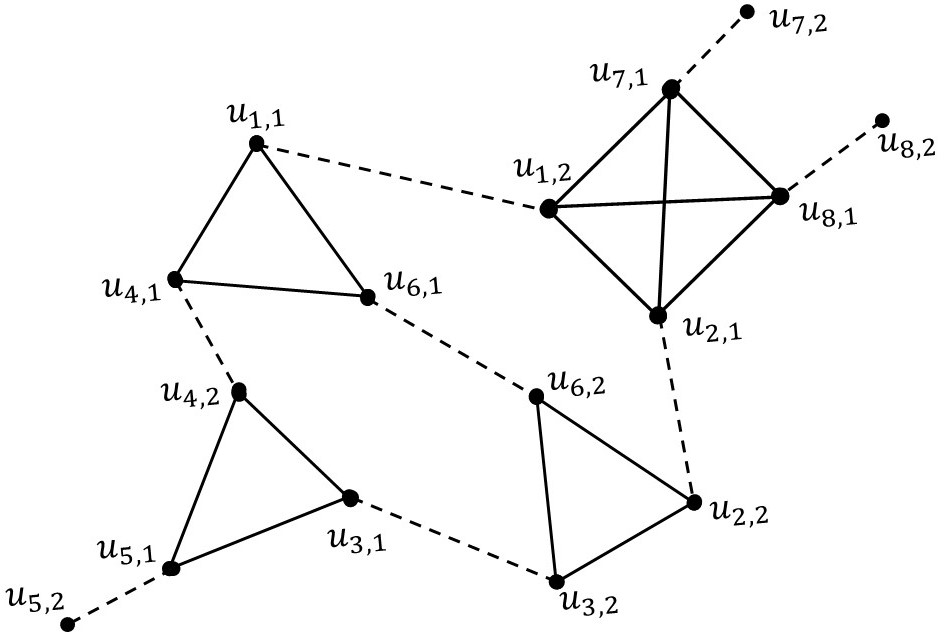}
\caption{$L^{\cdot \cdot}(G)$.}\label{Fig-1b}
\end{subfigure}
\caption{}\label{fig:fig-3}
\end{figure*}

\begin{theorem}\label{Thm-2.7}
For any graph $G$, $\Delta(G) \geq 3$, a vertex $w \in V(L(G))$, yields a rainbow neighbourhood (in $L(G)$)  if and only if $u_{i,1}$ or $u_{i,2}$ from which $w$ resulted, is a vertex of a maximum $v$-clique of some $v \in V(G)$.
\end{theorem}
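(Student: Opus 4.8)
The plan is to translate the entire statement into the language of proper edge colourings of $G$ and then to track, for a single vertex of $L(G)$, exactly which colours its closed neighbourhood can see.

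First I would fix the dictionary. A vertex $w\in V(L(G))$ corresponds to an edge $e_i=xy$ of $G$, and, since $L(G)$ is obtained from $L^{\cdot\cdot}(G)$ by contracting the broken line edge $u_{i,1}u_{i,2}$, the two copies $u_{i,1},u_{i,2}$ are precisely the representatives of $e_i$ lying in the $x$-clique and the $y$-clique of $L^{\cdot\cdot}(G)$. Hence the closed neighbourhood $N[w]$ in $L(G)$ is exactly the union of the $x$-clique and the $y$-clique (which overlap only in $w$), and the hypothesis that $u_{i,1}$ or $u_{i,2}$ lies in a maximum $v$-clique is equivalent to the combinatorial condition $\max\{d_G(x),d_G(y)\}=\Delta(G)$. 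Since a chromatic colouring of $L(G)$ is the same thing as a proper edge colouring of $G$ with $\ell:=\chi'(G)=\chi(L(G))$ colours, the colours appearing in $N[w]$ are precisely those carried by the edges at $x$ together with those carried by the edges at $y$. Writing $A_x,A_y$ for these two colour sets, $w$ yields a rainbow neighbourhood if and only if $A_x\cup A_y=\{c_1,\dots,c_\ell\}$, that is, no colour is simultaneously absent at $x$ and absent at $y$.

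For the sufficiency direction I would assume, without loss of generality, that $d_G(x)=\Delta(G)$. For $\Delta(G)\ge 3$ the $x$-clique is a maximum clique of $L(G)$, so its $\Delta$ vertices already carry $\Delta$ pairwise distinct colours; if $\ell=\Delta$ this is the whole palette and $N[w]$ is rainbow at once. The remaining case $\ell=\Delta+1$ is where the rainbow neighbourhood convention must be used: exactly one colour is absent at $x$, and I would show that the convention (colour $c_\ell$ placed on as few edges as possible, after $c_1,\dots,c_{\ell-1}$ have been made maximal) forces that missing colour to occur on an edge at the other end $y$, so that again $A_x\cup A_y$ is complete. For the necessity direction I would argue by contraposition: assuming $d_G(x),d_G(y)\le \Delta-1$, I must produce a colour absent at both $x$ and $y$. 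Here the complements satisfy $|A_x^c|=\ell-d_G(x)\ge 1$ and $|A_y^c|=\ell-d_G(y)\ge 1$, and the task is to show $A_x^c\cap A_y^c\neq\varnothing$. The natural tool is once more the convention: I would show that under it the scarce top colour $c_\ell$ is confined to edges incident with maximum-degree vertices, so that neither the edges at $x$ nor those at $y$ can realise it, yielding a common absent colour and hence a non-rainbow $N[w]$.

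The hard part will be exactly the $\ell=\Delta+1$ (Class two) bookkeeping in both directions: membership in a maximum clique by itself guarantees only $\Delta=\ell-1$ distinct colours, so the conclusion genuinely depends on the chosen colouring and not merely on the clique structure. Everything therefore hinges on isolating the right property of the rainbow neighbourhood convention — morally, that the least frequent colour may always be taken to live on edges at the vertices of maximum degree — and I would formulate and prove this as a separate lemma before assembling the two implications.
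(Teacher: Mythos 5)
Your translation into edge-colouring language is faithful: $N[w]$ for $w=e_i=xy$ is indeed the union of the $x$-clique and the $y$-clique of $L(G)$, the hypothesis of the theorem is indeed equivalent to $\max\{d_G(x),d_G(y)\}=\Delta(G)$, and the question does reduce to whether $A_x\cup A_y$ is the full palette. The genuine gap is that your proposal defers all of its actual content to the two ``convention'' lemmas, and both of those lemmas are false, so the plan cannot be completed. For the necessity step (that $A_x^c\cap A_y^c\neq\varnothing$ whenever both endpoint degrees are below $\Delta$), take $G$ to be a triangle $abc$ with one pendant edge $ad$: then $\Delta(G)=3$ and $\chi(L(G))=\chi'(G)=3$, but the three edges $ab,ac,bc$ form a triangle in $L(G)$ and therefore receive three distinct colours in \emph{every} proper colouring, convention or not; hence $N[bc]$ sees all three colours and $bc$ yields a rainbow neighbourhood although $d_G(b)=d_G(c)=2<\Delta(G)$. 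No property of the rainbow neighbourhood convention can repair this, because the failure occurs in every chromatic colouring. What your own framework exposes here is the structural point that cliques of $L(G)$ arise not only from stars of $G$ (the $v$-cliques) but also from triangles of $G$, which the statement quietly ignores.

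The sufficiency lemma fails as well, precisely in the Class two case you identified as the crux. In the Petersen graph ($\Delta=3$, $\chi(L(G))=\chi'(G)=4$) every vertex has maximum degree, so sufficiency would force every vertex of $L(G)$ to yield a rainbow neighbourhood. But take a convention-compliant colouring: let $c_1$ be a perfect matching ($5$ edges), leaving a $2$-factor consisting of two $5$-cycles; let $c_2$ and $c_3$ be maximum matchings of what remains ($4$ edges each) and $c_4$ the last two edges, one per $5$-cycle. In each $5$-cycle the single $c_4$-edge leaves three consecutive vertices at which $c_4$ is absent, and two of these are adjacent; the edge joining them sees only $c_1,c_2,c_3$ across its two endpoints, so it does not yield a rainbow neighbourhood. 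Thus ``the colour missing at $x$ must appear at $y$'' is not forced by the convention. For comparison, the paper's proof never confronts either difficulty: it asserts $\chi(L(G))=\Delta(G)$ (a Class one assumption, false for the Petersen graph) and asserts that only vertices of maximum $v$-cliques can yield rainbow neighbourhoods (false in the triangle-plus-pendant example). Your reduction is the right way to analyse the statement, but carried out honestly it refutes the theorem at exactly the two points you postponed rather than proving it; the separate lemma you propose to isolate does not exist.
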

\begin{proof}
Both the necessary and sufficient conditions follows directly from considering the expanded line graph.

If $G$ is 3-regular then all maximal cliques in $L(G)$ are $K_3$ (triangles) hence, the clique number is $\omega(L(G)) = 3$ and the result follows immediately because pairwise, distinct cliques share at most one common vertex. Now assume graph $G$ has $m$ vertices of maximum degree $\Delta(G) \neq \delta (G)$. Hence, in the line graph, $m$ maximum cliques $K_{\Delta(G)}$ exist, and only the vertices of these maximum cliques can yield a rainbow neighbourhood on the colours $(c_1,c_2,c_3,\dots,c_{\Delta(G)})$ in $L^{\cdot \cdot}(G)$. Clearly after contracting the broken line edges these vertices remain the same and $\chi(L(G)) = \Delta(G)$ remains because pairwise, two distinct cliques share at most one common vertex. Therefore, the result.
\end{proof}

\begin{corollary}\label{Cor-2.8}
Let a graph $G$ have $\ell$ vertices of degree $\Delta(G)$ then $r_\chi(L(G)) \leq \ell\cdot \Delta(G)$.
\end{corollary}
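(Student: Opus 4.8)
The plan is to obtain the bound as a counting consequence of Theorem \ref{Thm-2.7}. First I would recall the correspondence set up by the expanded line graph: each vertex $w \in V(L(G))$ arises from an edge $e_i = uv$ of $G$, and its two pre-images $u_{i,1}, u_{i,2}$ lie respectively in the $u$-clique and the $v$-clique. A \emph{maximum} $v$-clique is exactly $K_{\Delta(G)}$, which occurs precisely when $d_G(v) = \Delta(G)$. Consequently Theorem \ref{Thm-2.7} translates into the following statement about edges: the vertex $w$ yields a rainbow neighbourhood in $L(G)$ if and only if the edge $e_i$ is incident with at least one vertex of degree $\Delta(G)$. This reformulation converts the problem of counting rainbow-yielding vertices of $L(G)$ into the problem of counting edges of $G$ that meet the maximum-degree vertex set.

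The second step is the counting estimate itself. Let $D \subseteq V(G)$ be the set of the $\ell$ vertices of degree $\Delta(G)$. Each such vertex is incident with exactly $\Delta(G)$ edges, so summing the degrees over $D$ yields $\ell \cdot \Delta(G)$ incidences. By the previous step every edge contributing to $r_\chi(L(G))$ is incident with at least one vertex of $D$, hence each such edge is counted at least once in this sum. Therefore the number of distinct edges incident with a maximum-degree vertex, which equals $r_\chi(L(G))$, is at most $\ell \cdot \Delta(G)$, giving the claimed inequality.

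I expect the only point requiring care to be the source of the inequality rather than equality: an edge joining two vertices that both have degree $\Delta(G)$ is counted twice in the sum $\ell \cdot \Delta(G)$, which is exactly the slack in the bound. Equality therefore holds precisely when no two maximum-degree vertices are adjacent. Beyond this observation there is no real obstacle, since the whole argument is a direct passage from Theorem \ref{Thm-2.7} to an edge-incidence count.
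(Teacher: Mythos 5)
Your proposal is correct and follows the same route the paper intends: the corollary is stated there without proof as an immediate counting consequence of Theorem \ref{Thm-2.7}, namely that only vertices of the $\ell$ maximum cliques $K_{\Delta(G)}$ in $L(G)$ (equivalently, edges of $G$ incident with a maximum-degree vertex, as you phrase it) can yield rainbow neighbourhoods, and there are at most $\ell\cdot\Delta(G)$ of them. Your added observation about the slack --- double-counting of edges joining two maximum-degree vertices, i.e.\ vertices shared by two maximum cliques --- is a correct refinement consistent with the paper's remark that distinct cliques share at most one vertex.
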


\begin{corollary}\label{Cor-2.9}
Consider any $t$-regular graph $G$, $t\geq 3$. Let the order of $G$ be $n$. Then for the line graph we have 
\begin{enumerate}\itemsep0mm
\item[(i)] $r_\chi(L(G)) = \varepsilon(G)$ which implies that:\\
\item[(ii)] $r_\chi(G) + r_\chi(L(G)) = r_\chi(G) + \varepsilon(G)$ and $r_\chi(G)\cdot r_\chi(L(G)) =  r_\chi(G)\cdot \varepsilon(G)$.
\end{enumerate}
\end{corollary}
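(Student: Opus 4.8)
The plan is to derive part (i) directly from Theorem \ref{Thm-2.7}, exploiting the regularity of $G$, and then to obtain part (ii) by mere substitution.

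First I would observe the crucial structural consequence of regularity. Since $G$ is $t$-regular, every vertex $v \in V(G)$ satisfies $d_G(v) = t$, so in the expanded line graph $L^{\cdot\cdot}(G)$ each $v$-clique is a copy of $K_t$. In particular $\Delta(G) = \delta(G) = t \geq 3$, whence every $v$-clique attains the maximum possible size $t = \Delta(G)$ and is therefore a maximum $v$-clique. This is exactly the hypothesis that will let Theorem \ref{Thm-2.7} fire at every vertex.

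Next I would recall that, upon contracting the broken line edges, each vertex $w$ of $L(G)$ arises from a pair $u_{i,1},u_{i,2}$ associated with a single edge $e_i = xy$ of $G$, where $u_{i,1}$ lies in the $x$-clique and $u_{i,2}$ lies in the $y$-clique. Since both of these cliques are maximum (indeed all cliques are maximum in the regular case), each of $u_{i,1}$ and $u_{i,2}$ is a vertex of a maximum clique, so the ``or'' condition in Theorem \ref{Thm-2.7} is satisfied for every $w \in V(L(G))$. Hence every such $w$ yields a rainbow neighbourhood in $L(G)$. Because $|V(L(G))| = \varepsilon(G)$, counting these vertices gives $r_\chi(L(G)) = \varepsilon(G)$, which proves (i).

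Finally, part (ii) follows at once by substituting $r_\chi(L(G)) = \varepsilon(G)$ into the sum $r_\chi(G) + r_\chi(L(G))$ and the product $r_\chi(G)\cdot r_\chi(L(G))$. I do not anticipate a genuine obstacle here, as the corollary is simply a specialisation of Theorem \ref{Thm-2.7}; the only point demanding care is to confirm that regularity forces \emph{every} $v$-clique to be a maximum clique, so that no vertex of $L(G)$ can fail the criterion of that theorem and the count is exactly $\varepsilon(G)$ rather than something smaller.
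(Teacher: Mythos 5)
Your proposal is correct and matches the paper's (implicit) reasoning exactly: the paper states this corollary without proof as an immediate consequence of Theorem~\ref{Thm-2.7}, and your argument simply spells out that consequence --- regularity forces every $v$-clique in $L^{\cdot\cdot}(G)$ to be a maximum clique, so every vertex of $L(G)$ satisfies the criterion of Theorem~\ref{Thm-2.7} and the count is $|V(L(G))| = \varepsilon(G)$. Part (ii) is, as you say, mere substitution, so nothing further is needed.
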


\begin{proposition}\label{Prop-2.6e}
For $n\ge 2$, we have $r_\chi(K_n) + r_\chi(L(K_n))= \frac{n}{2}(n+1)$ and $r_\chi(K_n)\cdot r_\chi(L(K_n))= \frac{n^2}{2}(n-1)$.
\end{proposition}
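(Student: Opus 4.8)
The plan is to evaluate the two summands $r_\chi(K_n)$ and $r_\chi(L(K_n))$ separately and then perform the arithmetic. The first is immediate from Proposition~\ref{Prop-2.1c}, which gives $r_\chi(K_n)=n$. For the second I would exploit that $K_n$ is $(n-1)$-regular with size $\varepsilon(K_n)=\binom{n}{2}=\frac{n(n-1)}{2}$, so that the regularity hypothesis of Corollary~\ref{Cor-2.9} is available precisely when $n-1\geq 3$.

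For $n\geq 4$ I would invoke Corollary~\ref{Cor-2.9}(i) directly: since $K_n$ is $t$-regular with $t=n-1\geq 3$, every vertex of $L(K_n)$ yields a rainbow neighbourhood, whence $r_\chi(L(K_n))=\varepsilon(K_n)=\frac{n(n-1)}{2}$.

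The main obstacle will be the two boundary cases $n=2$ and $n=3$, where $t=n-1<3$ places Corollary~\ref{Cor-2.9} out of reach and a direct check is required. For $n=2$ the graph $K_2$ has a single edge, so $L(K_2)=K_1$ and $r_\chi(L(K_2))=1=\frac{2\cdot 1}{2}$. For $n=3$ any two of the three edges of $K_3$ meet, so $L(K_3)=K_3$, and Proposition~\ref{Prop-2.1c} gives $r_\chi(L(K_3))=3=\frac{3\cdot 2}{2}$. Hence the uniform identity $r_\chi(L(K_n))=\frac{n(n-1)}{2}$ holds for every $n\geq 2$.

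It then remains only to substitute. The sum is $n+\frac{n(n-1)}{2}=\frac{n(n+1)}{2}$ and the product is $n\cdot\frac{n(n-1)}{2}=\frac{n^2(n-1)}{2}$, which are exactly the asserted values $\tfrac{n}{2}(n+1)$ and $\tfrac{n^2}{2}(n-1)$.
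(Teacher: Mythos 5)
Your proof is correct and follows essentially the same route as the paper: both compute $r_\chi(K_n)=n$ and reduce the line-graph term to the machinery of Theorem~\ref{Thm-2.7}, the paper citing Theorem~\ref{Thm-2.7} and Corollary~\ref{Cor-2.8} directly, while you invoke Corollary~\ref{Cor-2.9}, which is just the specialization of that theorem to regular graphs, so the substance is identical. The one genuine difference is your explicit treatment of the boundary cases $n=2$ and $n=3$: Theorem~\ref{Thm-2.7} and Corollary~\ref{Cor-2.9} both require maximum degree at least $3$ (i.e.\ $n\geq 4$ for $K_n$), and the paper's one-line proof silently applies this machinery outside its stated hypotheses. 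Your direct checks that $L(K_2)=K_1$ and $L(K_3)=K_3$, combined with Proposition~\ref{Prop-2.1c}, close that small gap, so your write-up is in fact slightly more complete than the paper's own argument.
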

\begin{proof}
The line graph of $K_n$ is $2(n-2)$-regular on $\frac{n}{2}(n-1)$ vertices and generally very difficult to present graphically. The proof follows from Theorem \ref{Thm-2.7} and Corollary \ref{Cor-2.8}.
\end{proof}

\begin{proposition}\label{Prop-2.6f}
For a complete $\ell$-partite graph $K_{r_1,r_2,\dots,r_\ell}$, where $\ell \geq 2,\ r_i \geq 1$ and $r_i \geq r_{i+1}>r_\ell, 1\leq i \leq \ell-2$ we have
\begin{enumerate}\itemsep0mm
\item[(i)] $r_\chi(K_{r_1,r_2,\dots,r_\ell}) + r_\chi(L(K_{r_1,r_2,\dots,r_\ell})) = \sum\limits_{i=1}^{\ell}r_i + \frac{r_\ell}{2}\sum\limits_{i=1}^{\ell -1}r_i(\sum\limits_{i=1}^{\ell -1}r_1 - 1)$.
\item[(ii)] $r_\chi(K_{r_1,r_2,\dots,r_\ell})\cdot r_\chi(L(K_{r_1,r_2,\dots,r_\ell})) = \sum\limits_{i=1}^{\ell}r_i\cdot \frac{r_\ell}{2}\sum\limits_{i=1}^{\ell -1}r_i(\sum\limits_{i=1}^{\ell -1}r_1 - 1)$.
\end{enumerate}
\end{proposition}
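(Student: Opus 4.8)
The plan is to compute the two summands $r_\chi(K_{r_1,\dots,r_\ell})$ and $r_\chi(L(K_{r_1,\dots,r_\ell}))$ separately, and then read off parts (i) and (ii) by forming their sum and product. The first summand is immediate from Proposition \ref{Prop-2.1e}: in a complete multipartite graph every closed neighbourhood already meets all colour classes, so $r_\chi(K_{r_1,\dots,r_\ell}) = \sum_{i=1}^{\ell} r_i$. All of the work therefore lies in evaluating $r_\chi(L(K_{r_1,\dots,r_\ell}))$.

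For the line graph I would run the machinery of Theorem \ref{Thm-2.7} together with Corollary \ref{Cor-2.8}. The hypothesis $r_i \ge r_{i+1} > r_\ell$ forces the $\ell$-th part to be the unique smallest one, so its $r_\ell$ vertices are exactly the vertices of maximum degree $\Delta = \sum_{i=1}^{\ell-1} r_i$, while every other vertex has strictly smaller degree. Hence the only maximum $v$-cliques are the $r_\ell$ copies of $K_\Delta$ arising from the part-$\ell$ vertices. I would then check that these maximum $v$-cliques are pairwise disjoint: two part-$\ell$ vertices are non-adjacent, so no edge of $G$ is incident to two of them, and thus no vertex of $L(G)$ lies in two of them. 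With disjointness established, Theorem \ref{Thm-2.7} pins the rainbow-yielding vertices of $L(G)$ to precisely those lying in these maximum $v$-cliques, and a direct count of the vertices they contribute furnishes the closed form for $r_\chi(L(K_{r_1,\dots,r_\ell}))$.

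The delicate point, and the step I expect to be the genuine obstacle, is that the conclusion of Theorem \ref{Thm-2.7} is sensitive to the rainbow-neighbourhood convention. Under an arbitrary optimal edge-colouring of $K_{r_1,\dots,r_\ell}$ one can manufacture additional rainbow neighbourhoods on edges joining two large parts, which would inflate the count beyond the maximum-$v$-clique contribution. I would therefore fix the colouring by the convention, taking each successive colour class to be a maximum matching; this concentrates the small colour classes on the edges at the heavy part and guarantees that a vertex of $L(G)$ collects all $\Delta$ colours in its closed neighbourhood if and only if it sits in one of the maximum $v$-cliques. Checking carefully that the lighter parts never accidentally complete the palette is where the argument has to be made airtight.

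Finally, substituting the value of $r_\chi(L(K_{r_1,\dots,r_\ell}))$ obtained above, together with $r_\chi(K_{r_1,\dots,r_\ell}) = \sum_{i=1}^{\ell} r_i$, into $r_\chi(K) + r_\chi(L(K))$ and $r_\chi(K)\cdot r_\chi(L(K))$ and simplifying yields the two displayed identities (i) and (ii).
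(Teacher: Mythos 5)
Your overall route is the same as the paper's: take $r_\chi(K_{r_1,\dots,r_\ell})=\sum_{i=1}^{\ell}r_i$ from Proposition \ref{Prop-2.1e}, and for the line graph appeal to Theorem \ref{Thm-2.7} and Corollary \ref{Cor-2.8}, together with the observation that the hypothesis $r_i\geq r_{i+1}>r_\ell$ makes part $\ell$ the unique smallest part, so that the maximum $v$-cliques of $L(G)$ are exactly the $r_\ell$ pairwise vertex-disjoint copies of $K_{\Delta}$, $\Delta=\sum_{i=1}^{\ell-1}r_i$, arising from the part-$\ell$ vertices. That intermediate reasoning is sound, and it is precisely what the paper's two-line proof leaves implicit.

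The genuine gap is your final step, the claim that ``a direct count of the vertices they contribute furnishes the closed form.'' It does not. Counting the vertices of $L(G)$ lying in $r_\ell$ pairwise disjoint copies of $K_{\Delta}$ gives $r_\ell\cdot\Delta=r_\ell\sum_{i=1}^{\ell-1}r_i$, whereas the proposition asserts $r_\chi(L(K_{r_1,\dots,r_\ell}))=\frac{r_\ell}{2}\bigl(\sum_{i=1}^{\ell-1}r_i\bigr)\bigl(\sum_{i=1}^{\ell-1}r_i-1\bigr)$ (reading the evident typo $r_1$ as $r_i$), which is $r_\ell$ times the number of \emph{edges} of $K_{\Delta}$, not of its vertices. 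The two expressions coincide only when $\sum_{i=1}^{\ell-1}r_i=3$; already for $K_{2,2,1}$ your count gives $4$ while the stated formula gives $6$. So your argument, carried out honestly, establishes identities different from (i) and (ii): either you would need a reason to count something other than the vertices of the maximum cliques (none is available within the framework of Theorem \ref{Thm-2.7}), or the displayed formula is itself in error --- which is what your reasoning in fact exposes. Incidentally, your worry about the colouring convention is also well founded, but it is a defect of Theorem \ref{Thm-2.7} itself rather than of your use of it: in $L(K_{2,2,1})$ every chromatic colouring makes all $8$ vertices yield rainbow neighbourhoods, not only the $4$ lying in the maximum clique, so no choice of convention rescues the maximum-clique count.
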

\begin{proof}
The result is a direct consequence of Theorem \ref{Thm-2.7} and Corollary \ref{Cor-2.8}. Note that the complete $\ell$-partite graph under consideration has exactly $r_\ell$ maximum complete graphs in the corresponding line graph.
\end{proof}

Results from applying Theorem \ref{Thm-2.2} and Corollary \ref{Cor-2.9} to some important $2$-chromatic graphs are given in Table \ref{Tab-1}.

\begin{tabularx}{\textwidth}{|X|X|X|X|X|X|}
\hline
Graph $G$ & $\nu(G)$ & $\varepsilon(G)$ & Degree regularity & $r_\chi(G)+r_\chi(L(G))$ & $r_\chi(G)\cdot r_\chi(L(G))$\\
\hline
Iofinova-Ivanov & 110 & 165 & 3 & 275 & 18150\\
\hline
Balaban 10-cage & 70 & 105 & 3 & 175 & 7350\\
\hline
Cubicle & 8 & 12 & 3 & 20 & 96\\
\hline
Dyck & 32 & 48 & 3 & 80 & 1536\\
\hline
Ellingham-Horton  & 54 (78) & 81 (167) & 3 & 135(245) & 4374(13026)\\
\hline
$F_26A$ & 26 & 39 & 3 & 65 & 1014\\
\hline
Folkman & 20 & 40 & 4 & 60 & 800\\
\hline
Foster & 90 & 135 & 3 & 225 & 12150\\
\hline
Franklin & 12 & 18 & 3 & 30 & 216\\
\hline
Gray & 54 & 81 & 3 & 135 & 4374\\
\hline
Harries & 70 & 105 & 3 & 175 & 7350\\
\hline
Heawood & 14 & 21 & 3 & 35 & 294\\
\hline
Hoffman & 16 & 32 & 4 & 48 & 512\\
\hline
Horton & 96 & 144 & 3 & 240 & 13824\\
\hline
Ljubljana & 112 & 168 & 3 & 280 & 18816\\
\hline
Naura & 24 & 36 & 3 & 60 & 864\\
\hline
Pappus & 18 & 27 & 3 & 45 & 486\\
\hline
Tutte-Coxeter & 30 & 45 & 3 & 75 & 1350\\
\hline
\caption{}\label{Tab-1}
\end{tabularx}

\subsection{Important observations}

Following from Theorem \ref{Thm-2.1} we have that $r_\chi(G) \geq \chi(G) \Rightarrow r_\chi(L(G)) \geq \chi(L(G))$ and $r_\chi(\overline{G}) \geq \chi(\overline{G})$. Hence, all known Nordhauss-Gaddum lower bounds apply in respect of the sum and the product of rainbow neighbourhood numbers for $G$, $L(G)$ and $\overline{G}$. So do other lower bounds for $\chi(G)$ and correspondingly for $L(G)$ and $\overline{G}$, apply. Note we adopt the convention that $\overline{K}_1 = K_1$ and $L(K_1) = K_1$. For a graph of order $n\geq 1$ and size $q\geq 0$, we list a few of these bounds below.

\begin{enumerate}\itemsep0mm 
\item[(i)] $2\sqrt{n} \leq r_\chi(G) + r_\chi(\overline{G}) \leq 2n$, and $n \leq r_\chi(G)\cdot r_\chi(\overline{G}) \leq n^2$.
\item[(ii)]  $2\leq r_\chi(G) + r_\chi(L(G)) \leq n +\ell \cdot \Delta(G)$, and $1 \leq r_\chi(G)\cdot r_\chi(L(G)) \leq n\ell\cdot \Delta(G)$.
\item[(iii)]  It is known that if graph $G$ is $t$-regular then $\chi(G) \geq \frac{n}{n-t}$ and therefore, $\chi(\overline{G}) \geq \frac{n}{t+1}$. Hence, $r_\chi(G) + r_\chi(\overline{G}) \geq \frac{n(n+1)}{(n-t)(t+1)}$ and $r_\chi(G)\cdot r_\chi(\overline{G}) \geq \frac{n^2}{(n-t)(t+1)}$.
\item[(iv)]  Let $\gamma(G)$ be the domination number of $G$. Since $\gamma(G) \leq n - \Delta(G)$ it follows that $r_\chi(G) + \gamma(G) \leq 2n- \Delta(G)$ and $r_\chi(G)\cdot \gamma(G) \leq n(n-\Delta(G))$. Also since, we have the upper bound $\gamma(G) \leq \lceil \frac{n+1-\delta(G)}{2}\rceil$ similar inequalities are found in terms of $\delta(G)$.
\end{enumerate} 

\begin{theorem}\label{Thm-2.10}
For a connected graph $G$ of order $n \geq 3$ we have $r_\chi(G) = \chi(G)$ if and only if $G$ is an odd cycle or complete.
\end{theorem}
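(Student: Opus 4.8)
The plan is to prove the two directions separately, dispatching sufficiency directly from the propositions already established and concentrating the real work on necessity, which I would attack through its contrapositive. Sufficiency needs only earlier computations: if $G=K_n$ then Proposition~\ref{Prop-2.1c} gives $r_\chi(G)=n=\chi(G)$, and if $G$ is an odd cycle then Proposition~\ref{Prop-2.1b} gives $r_\chi(G)=3=\chi(G)$, so both listed families realise the equality and nothing further is required in this direction.

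For necessity I would establish the contrapositive: if a connected graph $G$ on $n\ge 3$ vertices is neither complete nor an odd cycle, then $r_\chi(G)>\chi(G)$. Since $r_\chi(G)\ge\chi(G)$ always holds by Theorem~\ref{Thm-2.1}, only strictness is at stake. As $G$ is not complete we have $\chi(G)\le n-1$, and as $G$ is connected on at least three vertices we have $\chi(G)\ge 2$, so two cases remain. If $\chi(G)=2$ then $G$ is bipartite and Theorem~\ref{Thm-2.2} gives $r_\chi(G)=n\ge 3>2=\chi(G)$, as required. This reduces the whole problem to the principal range $3\le\chi(G)\le n-1$.

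In that range $G$ is connected, not complete and (by hypothesis) not an odd cycle, so Brooks' theorem applies and yields $\chi(G)\le\Delta(G)$; in particular, writing $\ell=\chi(G)\ge 3$, the graph carries a vertex $w$ with $d(w)\ge\ell\ge 3$, so $G$ is not $2$-regular. Fixing a chromatic colouring on $c_1,\dots,c_\ell$, I would compare $G$ with the extremal odd cycle, in which precisely the three vertices comprising the closed neighbourhood of the unique $c_\ell$-coloured vertex are rainbow. The aim is to show that the surplus adjacencies forced by $\Delta(G)\ge\ell$, together with connectivity and the optimality of the colouring, always produce a rainbow closed neighbourhood beyond such a minimal triple, hence $r_\chi(G)\ge\ell+1$ and the strict inequality follows.

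The main obstacle is exactly this last step: converting ``$\Delta(G)\ge\ell$ and $G$ not $2$-regular'' into a guaranteed surplus of rainbow neighbourhoods. The difficulty is that $r_\chi(G)=\chi(G)$ does not by itself control how the rainbow vertices are distributed among the colour classes---a class need not be forced to contain one---so a naive one-per-colour count is unavailable. I therefore expect the crux to be a careful local analysis at a maximum-degree vertex $w$ and its neighbourhood, showing that optimality of the colouring together with $d(w)\ge\ell$ forces either two rainbow vertices sharing a colour or a rainbow vertex lying outside the ``odd-cycle core'', either of which secures $r_\chi(G)\ge\ell+1$.
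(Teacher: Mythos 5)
Your sufficiency direction is fine and agrees with the paper. The necessity direction, however, is not a proof: after the bipartite case (which is handled correctly via Theorem~\ref{Thm-2.2}), the entire range $3 \le \chi(G) \le n-1$ is covered only by an announced ``aim''. You invoke Brooks' theorem to obtain a vertex $w$ with $d(w) \ge \chi(G)$, and then say you ``expect'' a local analysis at $w$ to force a rainbow neighbourhood beyond a minimal triple. That local analysis \emph{is} the theorem in this range, and it is nowhere carried out; the write-up stops exactly where the real argument would have to begin. For comparison, the paper's own proof of necessity takes a different route altogether: an induction on the order, in which one deletes a vertex that yields no rainbow neighbourhood, keeps the chromatic number unchanged, and derives a contradiction with the induction hypothesis; Brooks' theorem plays no role there.

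Worse, the missing step cannot be filled in the form you want. Let $G$ be the cycle $C_5$ with one pendant vertex $u$ attached to $v_1$: it is connected, of order $6$, neither complete nor an odd cycle, and $\chi(G)=3 \le \Delta(G)=3$, so it lies squarely in your ``principal range''. Under the paper's rainbow neighbourhood convention the class of $c_1$ must be a maximum independent set, and every maximum independent set of this graph contains $u$; running through the few resulting chromatic colourings (for instance $c_1=\{v_2,v_4,u\}$, $c_2=\{v_1,v_3\}$, $c_3=\{v_5\}$), one finds in every case that exactly three vertices yield rainbow neighbourhoods, while $N[u]$ and the remaining cycle vertices never do. Hence $r_\chi(G)=3=\chi(G)$, so the strict inequality $r_\chi(G)\ge \chi(G)+1$ that your contrapositive requires is false for this graph, and no analysis at a maximum-degree vertex can establish it. (This example in fact exposes a defect in the necessity direction of the statement itself -- the paper's induction silently assumes the vertex-deleted graph inherits the property -- but the immediate point is that your planned route runs into a genuine counterexample, not merely a technical difficulty.)
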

\begin{proof}
It is clear that if $G$ is a cycle $C_n$, $n$ is odd or a complete graph, $K_n$, then $r_\chi(C_n) = 3 = \chi(C_n)$ and $r_\chi(K_n) = n = \chi(K_n)$.

Conversely, let $n=3$ then $G$ is either the path $P_3$, \'{o}r, the cycle $C_3$ or put alternatively, the complete graph $K_3$. Since $r_\chi(P_3) = 3 \neq 2 = \chi(P_3)$ the result, $G = C_3$ (or $K_3$)$\Rightarrow r_\chi(G) = \chi(G)$ holds. Similarly for a graph on $n = 4$ vertices the result cannot hold for $P_4$, $C_4$, star $S_{1,3}$ or the 1-chord cycle. It only holds for that $G = K_4 \Rightarrow r_\chi(K_4) = \chi(K_4) = 4$. Assume the result holds for all graphs $G$ of order $3 \leq \ell \leq t$ if $G$ is either $C_\ell$, $\ell$ is odd, or $K_\ell$, $\forall \ell \in \N$.

Consider any graph $G$ of order $t+1$. Clearly it holds that $G = C_{t+1}$, $t+1$ is odd $\Rightarrow r_\chi(G) = \chi(G) = 3$. Also, $G = K_{t+1} \Rightarrow r_\chi(G) = \chi(G) = t+1$. So let $G$ be any other graph of order $t+1$ and assume $r_\chi(G) = \chi(G)$. Now we have $\chi(G) \leq r_\chi(G) < t+1$. Hence, it is possible to find and remove one vertex which does not yield a rainbow neighbourhood and the chromatic number remains the same. Therefore, we obtain a new graph $G'$ of order $t$ such that $G' \Rightarrow r_\chi(G') = \chi(G')$. The aforesaid is a contradiction hence, no such $G$ exists. It implies that for any $G$ of order $t+1$, only $G = C_{t+1}$, $t+1$ is odd $\Rightarrow r_\chi(G) = \chi(G)$ or $G = K_{t+1}\Rightarrow r_\chi(G) = \chi(G)$.
\end{proof}

\begin{corollary}
Any connected graph of order $n$ that have $r_\chi(G) = \chi(G)$ is either 3-chromatic or $n$-chromatic.
\end{corollary}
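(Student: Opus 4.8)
The plan is to read this off directly from Theorem~\ref{Thm-2.10}, which already classifies exactly which connected graphs satisfy $r_\chi(G)=\chi(G)$. First I would restrict to the range $n\ge 3$ where that theorem applies, and then dispose of the two remaining small orders $n\in\{1,2\}$ by hand.

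For $n\ge 3$, Theorem~\ref{Thm-2.10} tells us that $r_\chi(G)=\chi(G)$ forces $G$ to be either an odd cycle $C_n$ or a complete graph $K_n$. I would then simply compute the chromatic number in each case. If $G=C_n$ with $n$ odd, then $\chi(G)=3$, so $G$ is $3$-chromatic. If $G=K_n$, then $\chi(G)=n$, so $G$ is $n$-chromatic. Since these are the only two possibilities, every connected $G$ of order $n\ge 3$ with $r_\chi(G)=\chi(G)$ falls into one of the two stated classes. The overlap case $G=K_3=C_3$ is harmless: there $\chi(G)=3=n$, so the graph is simultaneously $3$-chromatic and $n$-chromatic, and the inclusive ``or'' in the statement is satisfied.

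It remains to handle $n=1$ and $n=2$, which lie outside the hypothesis of Theorem~\ref{Thm-2.10}. For $n=1$ the only connected graph is $K_1$, with $\chi(K_1)=1=n$, and $r_\chi(K_1)=1$ by Proposition~\ref{Prop-2.1c}, so it is $n$-chromatic. For $n=2$ the only connected graph is $P_2=K_2$, with $\chi(P_2)=2=n$ and $r_\chi(P_2)=2$ by Proposition~\ref{Prop-2.1a}; again it is $n$-chromatic. Thus the conclusion holds trivially at these orders as well.

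I do not expect any genuine obstacle here, since the corollary is an immediate translation of the classification in Theorem~\ref{Thm-2.10} into the language of chromatic numbers. The only points requiring a word of care are the boundary orders $n\le 2$, which the theorem does not cover, and the inclusive reading of the disjunction so that the self-overlapping case $K_3=C_3$ causes no ambiguity.
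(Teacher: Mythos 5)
Your proposal is correct and follows essentially the same route as the paper, whose proof is simply the one-line observation that the result follows directly from Theorem~\ref{Thm-2.10}; you merely flesh out the chromatic-number computation for the two cases and additionally cover the boundary orders $n\le 2$, which the paper leaves implicit.
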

\begin{proof}
The result follows directly from Theorem \ref{Thm-2.10}.
\end{proof}

Consider a connected graph $G$ on vertices $v_1,v_2,v_3,\dots,v_n$ and add $t$ isolated vertices. Now join each isolated vertex to vertices $v_2,v_3,v_4,\dots,v_n$ to obtain $G^*_t$. Since, $r_\chi(G) \geq \chi(G)$ an easy to state the next theorem.

\begin{theorem}\label{Thm-2.12}
For $a,b \in \N,\ a \geq b \geq 2$ there exists a graph $G$ such that $r_\chi(G) = a$, and $\chi(G) = b$.
\end{theorem}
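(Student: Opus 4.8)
The plan is to prove Theorem~\ref{Thm-2.12} by explicit construction, using the family $G^*_t$ introduced immediately before the statement together with the monotone behaviour of $r_\chi$ established earlier. Recall the construction: starting from a connected graph $G$ on vertices $v_1,v_2,\dots,v_n$, we adjoin $t$ isolated vertices and join each of them to every vertex in $\{v_2,v_3,\dots,v_n\}$ (but not to $v_1$) to obtain $G^*_t$. The idea is that each added vertex is adjacent to all of $v_2,\dots,v_n$, so its closed neighbourhood will capture all colours present on that set, while $v_1$ is deliberately excluded so that the added vertices behave uniformly and so that we retain control over the chromatic number. My first step would be to fix a convenient base graph: I would take $G = K_b$ on the vertices $v_1,\dots,v_b$, since by Proposition~\ref{Prop-2.1c} this already gives $\chi(K_b)=r_\chi(K_b)=b$, providing a clean starting point where the chromatic and rainbow-neighbourhood numbers agree and equal the target $b$.

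Next I would analyse $\chi(G^*_t)$. Since $v_2,\dots,v_b$ together with each new vertex form a structure in which the $b-1$ vertices $v_2,\dots,v_b$ are mutually adjacent (they lie in the clique $K_b$) and each new vertex is adjacent to all of them, the subgraph induced on $\{v_2,\dots,v_b\}$ plus one new vertex is a clique $K_b$; the new vertices are pairwise non-adjacent and all miss $v_1$. I would argue that a proper colouring needs exactly $b$ colours: the clique on $\{v_1,\dots,v_b\}$ forces $\chi \geq b$, and a colouring using colours $c_1,\dots,c_b$ on $v_1,\dots,v_b$ respectively can be extended by giving every new vertex the colour $c_1$ (the colour of the excluded vertex $v_1$), which is proper because no new vertex is adjacent to $v_1$ and the new vertices are mutually non-adjacent. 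Hence $\chi(G^*_t)=b$, and under the rainbow neighbourhood convention $c_1$ is the most-used colour, which is consistent with the earlier conventions.

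I would then count $r_\chi(G^*_t)$. With the colouring above, each of the $b$ original vertices $v_1,\dots,v_b$ lies in the clique $K_b$, so each $N[v_i]$ already contains all $b$ colours and yields a rainbow neighbourhood, contributing $b$. For each of the $t$ new vertices $u$, the closed neighbourhood $N[u]$ contains $u$ itself (colour $c_1$) together with $v_2,\dots,v_b$ (colours $c_2,\dots,c_b$), so it sees all $b$ colours and yields a rainbow neighbourhood, contributing $t$. Thus $r_\chi(G^*_t)=b+t$. Choosing $t=a-b \geq 0$ gives $r_\chi(G)=a$ and $\chi(G)=b$ simultaneously, which is exactly the claim; I would also note the degenerate boundary case $a=b$, handled by $t=0$ where $G^*_0=K_b$.

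The main obstacle I anticipate is verifying that the colouring I propose is genuinely optimal and genuinely chromatic under the rainbow neighbourhood convention, and that no other colouring could reduce the count of rainbow neighbourhoods below $b+t$; in other words, the delicate point is showing that the value $r_\chi$ is well-defined and equals $b+t$ regardless of the particular chromatic colouring chosen, rather than merely for the one I exhibit. I would address this by observing that every vertex of $G^*_t$ (both the clique vertices and the appended vertices) is adjacent to the full clique $\{v_2,\dots,v_b\}$ of size $b-1$, and together with its own colour this forces all $b$ colours into each closed neighbourhood in \emph{any} chromatic colouring, so in fact every one of the $b+t$ vertices yields a rainbow neighbourhood and $r_\chi(G^*_t)=b+t=|V(G^*_t)|$ unconditionally. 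This also dovetails with the remark following Theorem~\ref{Thm-2.2}, since here the closed neighbourhood of every vertex contains a representative of each colour.
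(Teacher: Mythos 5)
Your proposal is correct and follows exactly the paper's approach: the paper's proof is the one-liner ``Let $G = K_b$ and construct $G^*_{a-b}$; the result follows immediately,'' and you have simply supplied the verification details (that $\chi(G^*_{a-b})=b$, that every vertex yields a rainbow neighbourhood in any chromatic colouring, and the boundary case $a=b$) that the paper leaves implicit.
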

\begin{proof}
Let $G = K_b$ and construct $G^*_{a-b}$. The result follows immediately.
\end{proof}

The graph constructed in Theorem \ref{Thm-2.12} is minimum in order and size as well. Note that $K_{b-1}+(a-b+1)K_1 = K^*_{b,(a-b)}$.

\section{Conclusion}

Inherent to the proofs found in Proposition \ref{Prop-2.1c} and Proposition \ref{Prop-2.1d},  where $n$ is even; and from Proposition \ref{Prop-2.1e}, and in Proposition \ref{Prop-2.1f},  we observe that if it is possible to permit a chromatic colouring of any graph $G$ of order $n$ such that the star subgraph obtained from vertex $v$ as center and its open neighbourhood $N(v)$ the pendant vertices, has at least one coloured vertex from each colour for all $v \in V(G)$ then $r_\chi(G) = n$. It remains an open problem to characterise such graphs, if possible. 

\vspace{0.2cm}

Determining the rainbow neighbourhood number under other known graph operations remains open for study. Also, finding an efficient algorithm to examine the existence of the property in a given graph will be a valuable contribution. 

\vspace{0.2cm}

Note that the Rainbow Neighbourhood Convention for colouring ensures that the minimum number of rainbow neighbourhoods are yielded. For example, if the cycle $C_9$ with vertices labeled consecutively, $v_1,v_2,v_3,\dots,v_9$ are coloured $c(v_1)=c_1,c(v_2)=2,c(v_3)=c_3,c(v_4)=1,c(v_5)=c_2,c(v_6)=c_3,c(v_7)=c_1,c(v_8)=c_2,c(v_9)=c_3$, the number of rainbow neighbourhoods are $9$. We denote this as $r^+_\chi(C_9) = 9$. Studying the parameter $r^+_\chi(G)$ and the relationship thereof with $r_\chi(G)$ is a new field for further investigation.

\vspace{0.2cm}

It will be interesting to study possible results in respect of $r_\chi(G)$, similar to certain results found in \cite{AES,RLB} and for the graphical embodiments discussed in \cite{KSC}.

\end{document}